\newcommand{\grad}{\nabla}
\renewcommand{\L}{\mathcal{L}}
\DeclareMathOperator{\re}{Re}
\renewcommand{\Re}{\re}
\renewcommand{\tilde}[1]{\widetilde{#1}}
\newcommand{\tec}{Teichm\"uller }
\renewcommand{\leq}{\leqslant}
\renewcommand{\geq}{\geqslant}
\DeclareMathOperator{\Vol}{Vol}
\DeclareMathOperator{\dVol}{dVol}
\DeclareMathOperator{\dist}{dist}
\DeclareMathOperator{\inj}{inj}
\DeclareMathOperator{\Teich(S_g)}{Teich(S_g)}
\DeclareMathOperator{\Te}{Teich}
\DeclareMathOperator{\Ric}{Ric}
\DeclareMathOperator{\sign}{sign}
\DeclareMathOperator{\QD(X)}{QD(X)}
\DeclareMathOperator{\HBD(X)}{HBD(X)}
\DeclareMathOperator{\Sca}{Sca}
\DeclareMathOperator{\asydim}{asydim}
\DeclareMathOperator{\Diff}{Diff}
\newcommand{\Mod}{\mbox{\rm Mod}}
\newcommand{\param}{{\mathchoice{\mkern1mu\mbox{\raise2.2pt\hbox{$\centerdot$}}\mkern1mu}{\mkern1mu\mbox{\raise2.2pt\hbox{$\centerdot$}}\mkern1mu}{\mkern1.5mu\centerdot\mkern1.5mu}{\mkern1.5mu\centerdot\mkern1.5mu}}}
\numberwithin{equation}{section}
\theoremstyle{plain}
\newtheorem{theorem}{Theorem}[section]
\newtheorem{lemma}[theorem]{Lemma}
\newtheorem{proposition}[theorem]{Proposition}
\newtheorem*{acknowledgement}{Acknowledgement}
\theoremstyle{definition}
\newtheorem{definition}[theorem]{Definition}
\newtheorem{question}{Question}
\newtheorem{remark}[theorem]{Remark}
\theoremstyle{definition}
\newtheorem*{remarksenv}{Remarks}
\begin{document}

\date{}
\title[Scalar Curvature]
{On positive scalar curvature and moduli of curves}

\author{Kefeng Liu and Yunhui Wu}



\address{Department of Mathematics\\
        University of California, Los Angeles\\
        Los Angeles, CA 90095-1555\\}
\email{liu@math.ucla.edu\\}

\address{Department of Mathematics\\
       Rice University\\
     Houston, Texas, 77005-1892\\}
\email{yw22@rice.edu}


\begin{abstract}
In this article we first show that any finite cover of the moduli space of closed Riemann surfaces of genus $g$ with $g\geq 2$ does not admit any Riemannian metric $ds^2$ of nonnegative scalar curvature such that $ds^2 \succ ds_{T}^2$ where $ds_{T}^2$ is the Teichm\"uller metric.

Our second result is the proof that any cover $M$ of the moduli space $\mathbb{M}_{g}$ of a closed Riemann surface $S_{g}$ does not admit any complete Riemannian metric of uniformly positive scalar curvature in the quasi-isometry class of the Teichm\"uller metric, which implies a conjecture of Farb-Weinberger in \cite{Farb-prob}.
\end{abstract}


\maketitle
\section{Introduction}
Many aspects of positive scalar curvatures on Riemannian manifolds have been well understood since the fundamental works of Schoen-Yau \cite{SchoenYau79-1, SchoenYau79-2} and Gromov-Lawson \cite{GL80,GL83}. Important generalizations have been developed by Roe \cite{Roe93}, Yu \cite{Yu98} and many others. The main object of this paper is to study obstructions to the existence of positive scalar curvature metric on the moduli spaces of Riemann surfaces.

Let $S_g$ be a closed Riemann surface of genus $g$ with $g\geq 2$, $\Mod(S_g)$ be the mapping class group and $\Teich(S_g)$ be the Teichm\"uller space of $S_g$. Topologically $\Teich(S_g)$  is a manifold of real dimension $6g-6$, which carries various $\Mod(S_g)$-invariant metrics which descend to metrics on the moduli space $\mathbb{M}_g$ of $S_g$ with respective properties. For examples, the famous Weil-Petersson metric and Teichm\"uller metric.  The Teichm\"uller metric $ds_{T}^2$ is not Riemannian but is a complete Finsler metric. It was shown in \cite{Masur75} that $ds_T^2$ is not nonpositively curved in the metric sense by showing that there exists two different geodesic rays starting at the same point such that they have bounded Hausdorff distance. Furthermore, Masur and Wolf in \cite{MW95} showed that $(\Teich(S_g),ds_T^2)$ is not Gromov-hyperbolic. The Weil-Petersson metric $ds_{WP}^2$ is K\"ahler \cite{Ahlfors61}, incomplete \cite{Chu76,  Wolpert75}, geodesically convex \cite{Wolpert87} and has negative sectional curvature \cite{Wolpert86, Tromba86}. Since $g\geq 2$, the works in \cite{Wolpert03, Yamada04} tell that $\Teich(S_g)$ is not Gromov-hyperbolic. Let $\Te(S_{g,n}$ be the Teichm\"uller space of surfaces of genus $g$ with $n$ punctures. Brock and Farb in \cite{BF02} showed that the space $(\Te(S_{g,n}),ds_{WP}^2)$ is Gromov-hyperbolic if and only if $3g+n\leq 5$.

There are also some other important metrics on $\mathbb{M}_g$. For examples, the asymptotic Poincar\'e metric, the induced Bergman metric, the K\"ahler-Einstein metric, the McMullen metric, the Ricci metric, and the perturbed Ricci metric are all complete and K\"ahler metrics. The Kobayashi metric and the Carathe\'odory metric are complete and Finsler metrics. In \cite{LSY04, LSY05, McMullen00}, the authors showed that all these metrics are bi-Lipschitz (or equivalent) to the Teichm\"uller metric. And the Weil-Petersson metric plays important roles in their proofs.

The perturbed Ricci metric \cite{LSY04, LSY05} has pinched negative Ricci curvature. In particular it also has negative scalar curvature. The McMullen metric \cite{McMullen00} has negative scalar curvature at certain points since the metric, restricted on certain thick part of the moduli space, is the Weil-Petersson metric. However, Farb and Weinberger in \cite{FW-scalar} showed that any finite cover $M$ of the moduli space $\mathbb{M}_g$ $(g\geq 2)$ admits a complete finite-volume Riemannian metric of (uniformly bounded) positive scalar curvature, which is analogous to Block-Weinberger's result in \cite{BW99} on certain locally symmetric arithmetic manifolds.

The motivation of this paper is a result of Gromov-Lawson in  \cite{GL83} which says that \textsl{given a complete Riemannian manifold $(X,ds_1^2)$ of nonpositive sectional curvature, then $X$ can not admit any Riemannian metric $ds_2^2$ on $X$ with $ds_2^2 \succ ds_1^2$ such that $(X,ds_2^2)$ has positive scalar curvature}. (One can also see Theorem 1.1 in \cite{Roe93}) where $ds_2^2 \succ ds_1^2$ means that $ds_2^2 \geq k \cdot ds_1^2$ for some constant $k>0$. One immediate application is that the torus $\mathbb{T}^n$ ($n\ge 2$) can not carry a complete Riemannian metric of positive scalar curvature, which answers a question of Geroch. For low dimensions $n\leq 7$, this was first settled in a series of papers by R. Schoen and S. T. Yau in \cite{SchoenYau79-1} and \cite{SchoenYau79-2}.  

Our first result in this paper is the following theorem.
\begin{theorem}\label{mt-1}
Let $S_{g}$ be a closed Riemann surface of genus $g$ with $g\geq 2$ and $M$ be a finite cover of the moduli space $\mathbb{M}_{g}$ of $S_{g}$. Then for any Riemannian metric $ds^2$ on $M$ with $ds^2 \succ ds_T^2$ where $ds_T^2$ is the \tec metric, 
$$\inf_{p\in (M,ds^2)}\Sca(p)<0.$$
\end{theorem}

E. Leuzinger in \cite{leuz2010} showed that \textsl{any finite cover of the moduli space $\mathbb{M}_{g}$ of $S_{g}$ does not admit any Riemannian metric of uniformly positive scalar curvature such that the metric is bi-lipschitz to the \tec metric.} Theorem \ref{mt-1} generalizes his result and the method in this article is completely different from the one of E. Leuzinger in \cite{leuz2010}. The second author in \cite{Wu15} applied the negativity of the Ricci curvature of the perturbed Ricci metric \cite{LSY04, LSY05} to show that  \textsl{ any finite cover of the moduli space $\mathbb{M}_{g}$ does not admit any complete finite-volume Hermitian metric of nonnegative scalar curvature}. Moreover, he also showed that \textsl{ the total scalar curvature of any almost Hermitian metric, which is bi-Lipschitz to the Teichm\"uller metric, is negative provided the scalar curvature is bounded from below.} However, the method in \cite{Wu15} highly depends on the canonical complex structure on $\mathbb{M}_{g}$, which fails in the setting of Riemannian metrics. 

Theorem \ref{mt-1} applies to the metric $ds^2=ds^2+ds_a^2$ where $ds^2$ is either the McMullen metric or the perturbed Ricci metric and $ds_a^2$ is only Riemannian and not Hermitian. We remark here that there is no finite-volume condition for Theorem \ref{mt-1}. As stated above the Teichm\"uller metric is not nonpositively curved. So the argument in \cite{GL83} can not lead to Theorem \ref{mt-1}. We are going to use some recent developments in \cite{McMullen00, LSY04, LSY05, BBF14} on the geometry of Teichm\"uller space as bridges to prove Theorem \ref{mt-1}. 

\begin{remark}
(1). Following totally the same arguments in this article, one can deduce that Theorem \ref{mt-1} is still true for the Teichm\"uller space of noncompact surface $S_{g,n}$ of genus $g$ with $n$ punctures if $3g+n\geq 5$. Note that Theorem \ref{Kazdan-4-1} and \ref{deform} require that the dimension of the space is greater than or equal to $3$. 

(2). For the cases $(g,n)=\ (1,1) \ \textit{or}\ (0,4)$, it is not hard to see that Theorem \ref{mt-1} still holds without the assumptions on the Teichm\"uller metric, since the scalar curvature is exact sectional curvature for these cases. More precisely, Theorem \ref{mt-1} directly follows from the fact that the mapping class group contains free subgroups of rank $\geq 2$. 

(3). The existence theorem of Farb-Weinberger in \cite{FW-scalar}, which one can also see Theorem 4.5 in \cite{Farb-prob}, tells that Theorem \ref{mt-1} does not hold anymore without  the assumptions on the Teichm\"uller metric if $3g+n \geq 6$. It is \textsl{interesting} to know whether Theorem \ref{mt-1} is still true without the assumption on the Teichm\"uller metric when $3g+n=5$. 

\end{remark}

As stated above Farb and Weinberger in \cite{FW-scalar} proved the existence of complete Riemannian metrics of uniformly positive scalar curvature on the moduli space. Actually they also showed that these metrics are not quasi-isometric to the Teichm\"uller metric. Motivated by Chang's result in \cite{chang01} on certain locally symmetric spaces, they conjecture in \cite{Farb-prob} (see Conjecture 4.6 in \cite{Farb-prob}) that \textsl{any finite cover $M$ of the moduli space $\mathbb{M}_{g}$ of $S_{g}$ does not admit a finite volume Riemannian metric of (uniformly bounded) positive scalar curvature in the quasi-isometry class of the Teichm\"uller metric.} Instead of using Theorem \ref{gromovlawson} of Gromov-Lawson in the proof of Theorem \ref{mt-1}, we apply a theorem of Yu in \cite{Yu98} to give a short proof of the following result, which in particular gives a proof of this conjecture of Farb-Weinberger.
\begin{theorem}\label{mt-2}
Let $S_{g}$ be a closed surface of genus $g$ with $g\geq 2$. Then any cover $M$ of the moduli space $\mathbb{M}_{g}$ of $S_{g}$ does not admit a complete Riemannian metric of uniformly positive scalar curvature in the quasi-isometry class of the Teichm\"uller metric.
\end{theorem}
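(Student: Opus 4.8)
The plan is to convert the hypothesis of uniformly positive scalar curvature into the vanishing of a coarse index invariant, and then to prove that this invariant does not vanish for the coarse type of $(\Teich(S_g),ds_T^2)$, which contains every metric under consideration. The first step is a reduction to the universal cover. If $M\to\mathbb{M}_g$ is any cover carrying a complete Riemannian metric $ds^2$ of uniformly positive scalar curvature in the quasi-isometry class of $ds_T^2$, I would lift $ds^2$ to $\Teich(S_g)$: scalar curvature is a pointwise invariant, so the lift is still complete with scalar curvature $\geq\delta>0$, and by the very definition of being quasi-isometric to $ds_T^2$ the lift is quasi-isometric to $ds_T^2$ on $\Teich(S_g)$. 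Hence it suffices to show that the quasi-isometry class of $(\Teich(S_g),ds_T^2)$ contains no complete Riemannian metric of uniformly positive scalar curvature; this is the coarse analogue, in the spirit of Chang's argument \cite{chang01} in the locally symmetric case, of the more elementary Theorem~\ref{mt-1}.

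The obstruction I would use is the coarse (Roe) index of the spin Dirac operator. Since $\Teich(S_g)$ is diffeomorphic to $\mathbb{R}^{6g-6}$ it is spin, and for any complete Riemannian metric $g$ on $\Teich(S_g)$ the Dirac operator $D_g$ has a coarse index $\operatorname{Ind}(D_g)\in K_*(C^*(\Teich(S_g),g))$ in the $K$-theory of the associated Roe algebra. By the coarse index theorem of \cite{Roe93} this class depends only on the coarse structure and the spin structure; concretely it is the image $\alpha:=\mu([D])$ of the fundamental class under the coarse assembly map $\mu\colon K^{lf}_*(\Teich(S_g))\to K_*(C^*(\Teich(S_g),ds_T^2))$, so it is the same for every metric in the quasi-isometry class of $ds_T^2$. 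On the other hand, if $\Sca_g\geq\delta>0$ then the Lichnerowicz formula gives $D_g^2=\nabla^*\nabla+\tfrac14\Sca_g\geq\tfrac{\delta}{4}>0$, so the self-adjoint operator $D_g$ is invertible and $\operatorname{Ind}(D_g)=\alpha=0$.

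It remains to show $\alpha\neq0$. Two facts about the coarse geometry of $(\Teich(S_g),ds_T^2)$ are needed. First, it is a bounded-geometry coarse space of finite asymptotic dimension: finiteness of the asymptotic dimension comes from \cite{BBF14} via the quasi-isometric embedding of Teichm\"uller space into a finite product of quasi-trees, and then the work of Yu \cite{Yu98} shows that the coarse assembly map $\mu$ is injective. Second, $(\Teich(S_g),ds_T^2)$ is uniformly contractible; for this I would invoke Minsky's product regions theorem, which describes each thin part of $\Teich(S_g)$, up to bounded additive error, as a supremum-metric product of horoball-type regions, together with the cocompactness of the $\Mod(S_g)$-action on the thick part. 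Uniform contractibility (with bounded geometry) identifies the locally finite $K$-homology of $\Teich(S_g)$ with its coarse $K$-homology, in which the Dirac class is the nonzero fundamental class of $\mathbb{R}^{6g-6}$; since $\mu$ is injective, $\alpha=\mu([D])\neq0$. Combined with the preceding paragraph, this gives the contradiction $0\neq\alpha=0$, proving the theorem.

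The crux, and the main obstacle, is the nonvanishing step, and within it two points require genuine care. One is establishing uniform contractibility of Teichm\"uller space in the Teichm\"uller metric, where the behaviour near the cusps of $\mathbb{M}_g$ must be controlled through the product regions theorem. The other is making sure that the coarse index of the Dirac operator is well defined and quasi-isometry invariant for an \emph{arbitrary} complete Riemannian metric (with no bounded-geometry hypothesis on it), so that the invertibility argument of the second paragraph applies to the hypothetical metric $ds^2$ itself; concretely one wants the coarse index of $D_{ds^2}$ to agree with $\operatorname{Ind}(D_{g_0})$ for a fixed bounded-geometry model $g_0$ in the same quasi-isometry class. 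One must also extract from \cite{BBF14} the finiteness of the asymptotic dimension of the space $(\Teich(S_g),ds_T^2)$, not merely of the mapping class group. An alternative route to $\alpha\neq0$ would go through McMullen's theorem \cite{McMullen00} that the moduli space is K\"ahler hyperbolic — the McMullen metric being bi-Lipschitz to $ds_T^2$ — using that a bounded primitive of the K\"ahler form produces a nonvanishing coarse index by a Gromov--Lawson twisting argument.
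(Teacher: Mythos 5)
Your proposal follows the same strategy as the paper: establish that $(\Teich(S_g),ds^2)$ is uniformly contractible and has finite asymptotic dimension (the latter via \cite{BBF14} and quasi-isometry invariance), and then invoke coarse index theory to rule out uniformly positive scalar curvature. The only real difference is that the paper black-boxes the index-theoretic part by citing Yu's Corollary 7.3 in \cite{Yu98} verbatim (``a uniformly contractible Riemannian manifold with finite asymptotic dimension cannot have uniform positive scalar curvature''), whereas you unpack that corollary into its ingredients: injectivity of the coarse assembly map from finite asymptotic dimension, nonvanishing of the coarse fundamental class for a uniformly contractible manifold, and vanishing of the coarse index via Lichnerowicz. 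That is the content of Yu's proof, so nothing is gained or lost mathematically, though your version makes explicit a genuine subtlety (whether the coarse index is well defined and quasi-isometry invariant for a metric with no bounded-geometry hypothesis) that the paper silently delegates to Yu's statement.

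The one place where you take a wrong turn is uniform contractibility. You propose to derive it from Minsky's product regions theorem together with cocompactness of the thick part; that theorem is the tool \cite{BBF14} use for the asymptotic dimension bound, not for contractibility, and it is both unnecessary and unlikely to yield uniform contractibility cleanly. The paper's route is much simpler: by Teichm\"uller's theorem $(\Teich(S_g),ds_T^2)$ is uniquely geodesic with geodesics depending continuously on endpoints, so every metric ball $B_{ds_T^2}(p;r)$ is contractible (Proposition \ref{contract}); a $(L,K)$-quasi-isometry then gives $B_{ds^2}(p;r)\subset B_{ds_T^2}(p;L(r+K))\subset B_{ds^2}(p;L^2(r+K)+K)$, so $f(r)=L^2(r+K)+K$ witnesses uniform contractibility of $(\Teich(S_g),ds^2)$. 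You should replace the product-regions argument with this; with that substitution your proof is complete and coincides with the paper's.
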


There are no conditions on finite cover and finite volume in Theorem \ref{mt-2}, compared to the Farb-Weinberger conjecture. We remark here that Farb and Weinberger have a different approach to their conjecture by using methods from Chang's thesis \cite{chang01} together with a theorem of Farb-Masur in \cite{FM2010} on the asymptotic cone of the moduli space, which is different from the method in this article. We thank him for sharing their information.  

\subsection{Plan of the paper.} In Section \ref{np} we give some necessary preliminaries and notations for surface theory. In Section \ref{universal} we review some recent developments on the geometry of Teichm\"uller space which will be served as bridges to prove Theorem \ref{mt-1}. In Section \ref{dtpsc} we will show that any complete Riemannian metric on the moduli space of surfaces with nonnegative scalar curvature can be deformed an equivalent Riemannian metric of positive scalar curvature. Theorem \ref{mt-1} will be proved in Section \ref{proof of 1}. In Section \ref{proof of 2} we will disucss Theorem \ref{mt-2} in details. A related open problem will be discussed in Section \ref{question}.

\begin{acknowledgement}
The authors would like to thank E. Leuzinger, G. Yu and S. T. Yau for their interests. The first author is supported by an NSF grant. The second author is grateful to J. Brock and M. Wolf for their consistent encouragement and help. He also would like to acknowledge support from U.S. National Science Foundation grants DMS 1107452, 1107263, 1107367 ``RNMS: Geometric structures And Representation varieties"(the GEAR Network).
\end{acknowledgement}

\section{Notations and Preliminaries}\label{np} 

\subsection{Surfaces}
Let $S_g$ be a closed Riemann surface of genus $g$ with $g\geq 2$, and $\textsl{M}_{-1}$ denote the space of Riemannian metrics on $S_g$ with constant curvature $-1$, and $X=(S_g,\sigma|dz|^2)$ be an element in $\textsl{M}_{-1}$. The group $\Diff_0$ of diffeomorphisms of $S_g$ isotopic to the identity, acts  on $\textsl{M}_{-1}$ by pull-backs. The Teichm\"uller space $\Teich(S_g)$  of $S_g$  is defined by the quotient space
\begin{equation}
\nonumber \Teich(S_g)=M_{-1}/\Diff_0.  
\end{equation}

Let $\Diff_+$ be the group of orientation-preserving diffeomorphisms of $S_g$. The mapping class group $\Mod(S_g)$ is defined as
\begin{equation}
\nonumber \Mod(S_g)=\Diff_+/\Diff_0.  
\end{equation}
The moduli space $\mathbb{M}_g$ of $S_g$ is defined by the quotient space
\begin{equation}
\nonumber \mathbb{M}_g=\Teich(S_g)/\Mod(S_g).  
\end{equation}

The Teichm\"uller space has a natural complex structure, and its holomorphic cotangent space $T_{X}^{*}\Teich(S_g)$ is identified with the \textsl{quadratic differentials} 
$$\QD(X)=\{\phi(z)dz^2\}$$ 
while its holomorphic tangent space is identified with the \textsl{harmonic Beltrami differentials} 
$$\HBD(X)=\{\frac{\overline{\phi(z)}}{\sigma(z)}\frac{d\overline{z}}{dz}\}.$$ 

\subsection{Mapping class group} The mapping class group $\Mod(S_g)$ is a finitely generated discrete group which acts properly on the Teichm\"uller space.  One special set of generators  of $\Mod(S_g)$  is the Dehn-twists along simple closed curves, which play an important role in studying $\Mod(S_g)$. Let $\alpha$ be a nontrivial simple closed curve on $S_g$ and $\tau_{\alpha}$ be the Dehn-twist along $\alpha$. The following lemma will be applied later.

\begin{lemma}\label{dt-f}
Let $\alpha, \beta$ be two simple closed curves on $M$ such that the geometric intersection points $i(\alpha, \beta)\geq 2$. Then, for any $n,m \in \mathbb{Z}^{+}$, the group 
$<\tau_{\alpha}^n, \tau_{\beta}^m>$ is a free group of rank $2$. 
\end{lemma}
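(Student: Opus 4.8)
The plan is to invoke the classical ping-pong lemma (Tits' table-tennis lemma) applied to the action of the mapping class group on the space of measured foliations (or, equivalently, on Thurston's sphere $\mathcal{PMF}$, or on the space of projective isotopy classes of simple closed curves). The key geometric input is the behaviour of high powers of Dehn twists under this action: if $\alpha$ is a simple closed curve and $\tau_\alpha$ the Dehn twist along it, then for any isotopy class of curve (or measured foliation) $\gamma$ with $i(\alpha,\gamma)>0$, the sequence $\tau_\alpha^{n}(\gamma)$ converges projectively to $[\alpha]$ as $n\to\pm\infty$; more quantitatively, $i(\tau_\alpha^n(\gamma),\delta)$ grows linearly in $|n|$ with leading term $|n|\, i(\alpha,\gamma)\, i(\alpha,\delta)$. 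This is the standard estimate $\big| i(\tau_\alpha^n(\gamma),\delta) - |n|\, i(\alpha,\gamma)\, i(\alpha,\delta)\big| \leq i(\gamma,\delta)$, due to Fathi–Laudenbach–Poénaru / Ivanov.

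First I would fix the two curves $\alpha,\beta$ with $i(\alpha,\beta)\geq 2$ and set $a=\tau_\alpha^n$, $b=\tau_\beta^m$ for the given $n,m\in\mathbb{Z}^+$. I would then define the two ping-pong sets inside the set $\mathcal S$ of isotopy classes of essential simple closed curves (or inside $\mathcal{PMF}$): let $A$ be the set of classes $\gamma$ with $i(\alpha,\gamma) < i(\beta,\gamma)$ and $B$ the set of classes with $i(\beta,\gamma) < i(\alpha,\gamma)$. These are disjoint and nonempty ($[\alpha]\in B$, $[\beta]\in A$, using $i(\alpha,\beta)\geq 2 > 0 = i(\alpha,\alpha)$). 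The core of the argument is to check the ping-pong inclusions: for every nonzero exponent $k$, $a^k(B)\subseteq A$ and $b^k(A)\subseteq B$. To see $a^k(B)\subseteq A$: take $\gamma\in B$, so $i(\alpha,\gamma)>0$ (since otherwise $i(\beta,\gamma)<0$ is impossible, and $i(\alpha,\gamma)=0$ would force $\gamma=\alpha$ up to isotopy, contradicting $i(\beta,\gamma)<i(\alpha,\gamma)=0$); then using the estimate above with $\delta=\beta$ and $\delta=\alpha$, one computes that for $|kn|$ large $i(\alpha,\tau_\alpha^{kn}\gamma)=i(\alpha,\gamma)$ stays bounded while $i(\beta,\tau_\alpha^{kn}\gamma)\geq |kn|\,i(\alpha,\gamma)\,i(\alpha,\beta) - i(\gamma,\beta)$ grows without bound; since $n\geq 1$, $i(\alpha,\beta)\geq 2$ and $i(\alpha,\gamma)\geq 1$, already $|k|\geq 1$ suffices to make $i(\beta,\tau_\alpha^{kn}\gamma) > i(\alpha,\tau_\alpha^{kn}\gamma)$, i.e. $\tau_\alpha^{kn}\gamma\in A$. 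The symmetric computation gives $b^k(A)\subseteq B$. By the ping-pong lemma, $\langle a,b\rangle = \langle \tau_\alpha^n,\tau_\beta^m\rangle$ is free of rank $2$.

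The main obstacle is making the ping-pong inclusions hold for \emph{all} nonzero exponents $k$ with a single, fixed pair of sets, rather than only for large $|k|$; this is exactly where the hypothesis $i(\alpha,\beta)\geq 2$ (as opposed to $i(\alpha,\beta)=1$) is used, since it guarantees the intersection number strictly increases already at the first twist. One must be slightly careful about the degenerate classes $[\alpha]$ and $[\beta]$ themselves and about whether to work with curves or with measured foliations — working in $\mathcal{PMF}$ (where $i(\param,\param)$ is continuous and homogeneous) makes the estimates cleanest, and the sets $A,B$ defined by the same inequalities are then genuinely disjoint compact-complement subsets. A completely parallel argument appears in Fathi–Laudenbach–Poénaru and in Farb–Margalit's \emph{Primer on Mapping Class Groups}; I would cite that for the underlying intersection-number estimate and present only the ping-pong bookkeeping in detail.
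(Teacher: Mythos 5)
Your proposal is correct and is essentially the paper's own proof: the paper simply cites Chapter~3 of Farb--Margalit's \emph{Primer} (Theorem~3.14 there), whose argument is exactly the ping-pong on curves/measured foliations via the estimate $\bigl| i(\tau_\alpha^n(\gamma),\delta)-|n|\,i(\alpha,\gamma)\,i(\alpha,\delta)\bigr|\leq i(\gamma,\delta)$ that you reconstruct. One trivial slip: with your definitions of $A$ and $B$ the witnesses are $[\alpha]\in A$ and $[\beta]\in B$ (not the reverse), but this affects only the nonemptiness remark and not the ping-pong inclusions, which you verify correctly.
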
  

\begin{proof}
One can check chapter 3 in \cite{FM-mcg} for details.
\end{proof}

\subsection{The Teichm\"uller metric and Weil-Petersson metric}
Recall that the \textsl{Teichm\"uller metric} $ds_T^2$ on $\Teich(S_g)$ is defined as
$$||\frac{\overline{\phi(z)}}{\sigma(z)}||_{ds_T^2}:=\sup_{\psi dz^2 \in \QD(X), \ \ \int_{X}|\psi|=1} \Re  \int_X {\frac{\overline{\phi(z)}}{\sigma(z)} \cdot \psi(z)}\frac{dz\wedge d\overline{z}}{-2\textbf{i}}.$$

The induced path metric of the above metric, denoted by $\dist_{T}$, on $\Teich(S_g)$ can also be characterized as follows; let $p_1, p_2\in \Teich(S_g)$, then 
$$\dist_{T}(p_1,p_2)=\frac{1}{2}\log K$$
where $K\geq 1$ is the least number such that there is a $K$-quasiconformal mapping between the hyperbolic surfaces $p_1$ and $p_2$. The Teichm\"uller metric is not Riemannian but Finsler. The following fundamental theorem on the Teichm\"uller metric will be used later.

\begin{theorem}[Teichm\"uller]\label{teich}
(1). The Teichm\"uller space $(\Teich(S_g), ds_T^2)$ is complete.

(2). The Teichm\"uller space $(\Teich(S_g), ds_T^2)$ is uniquely geodesical, i.e., for any two points $p_1, p_2\in \Teich(S_g)$ there exists a unique geodesic $c:[0,1]\rightarrow (\Teich(S_g), ds_T^2)$ such that $c(0)=p_1$ and $c(1)=p_2$.
\end{theorem}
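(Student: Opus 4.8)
The plan is to reduce both assertions to Teichm\"uller's existence and uniqueness theorem for extremal quasiconformal maps, which is the true engine behind the distance formula $\dist_T(p_1,p_2)=\frac12\log K$ already recorded above. Recall the content of that theorem: in each homotopy class of quasiconformal homeomorphisms between two surfaces $X$ and $Y$ of genus $g\geq 2$, there is a unique map of smallest maximal dilatation, and this extremal map is a \emph{Teichm\"uller map}, i.e.\ its Beltrami differential has the form $k\,\overline{\phi}/|\phi|$ for a unique holomorphic quadratic differential $\phi$ on $X$ and a unique constant $k\in[0,1)$ related to the distance by $K=\frac{1+k}{1-k}=e^{2\dist_T(X,Y)}$. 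I will take this theorem as the input and derive completeness and unique geodesicity from it.

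For completeness I would fix a basepoint $X_0\in\Teich(S_g)$ and consider the map $\Phi$ sending $Y$ to the Beltrami differential $\mu_Y=k\,\overline{\phi}/|\phi|$ of the extremal map $X_0\to Y$. By the existence and uniqueness theorem, together with the continuous dependence of the extremal map on its target, $\Phi$ is a homeomorphism from $\Teich(S_g)$ onto the open unit ball $B=\{|\mu|<1\}$ in the $(6g-6)$-dimensional real vector space of holomorphic quadratic differentials on $X_0$. Under $\Phi$ one has $\dist_T(X_0,Y)=\frac12\log\frac{1+|\mu_Y|}{1-|\mu_Y|}$, so the closed metric ball $\overline{B}(X_0,R)$ is carried to the compact set $\{|\mu|\leq\tanh R\}\subset B$. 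Hence closed Teichm\"uller balls are compact; $(\Teich(S_g),ds_T^2)$ is proper, and any Cauchy sequence, being bounded, lies in such a compact ball and therefore converges. This gives part (1).

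For unique geodesicity, given $p_1,p_2$ I would let $f\colon p_1\to p_2$ be the extremal Teichm\"uller map, with quadratic differential $\phi$ on $p_1$ and total dilatation $K=e^{2\dist_T(p_1,p_2)}$, and build the \emph{Teichm\"uller line} $c$ by stretching along the horizontal foliation of $\phi$, choosing the parametrization so that the extremal map $p_1\to c(t)$ is again a Teichm\"uller map with the same $\phi$ and dilatation $K^{t}$. Because two Teichm\"uller maps sharing one quadratic differential compose to a Teichm\"uller map with multiplied dilatations, one gets $\dist_T(p_1,c(t))=t\,\dist_T(p_1,p_2)$ and $\dist_T(c(t),p_2)=(1-t)\,\dist_T(p_1,p_2)$, whence $c$ is a geodesic realizing the distance. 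Uniqueness is where the work lies: if $\gamma$ is any geodesic from $p_1$ to $p_2$, then $\dist_T(p_1,\gamma(t))+\dist_T(\gamma(t),p_2)=\dist_T(p_1,p_2)$, so the extremal maps $p_1\to\gamma(t)$ and $\gamma(t)\to p_2$ have dilatations whose product equals $K$; the rigidity in the equality case of the composition of dilatations forces both to be Teichm\"uller maps aligned with the same $\phi$, which pins $\gamma(t)$ onto the line $c$ and, matching distances, gives $\gamma(t)=c(t)$.

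The main obstacle is precisely this last rigidity statement, which is the substantive (uniqueness) half of Teichm\"uller's theorem: the assertion that equality in the subadditivity of maximal dilatation under composition can occur only for Teichm\"uller maps built from a common quadratic differential. I would prove it through the length--area (extremal length) method, or equivalently via the Reich--Strebel fundamental inequality, which quantifies how much a quasiconformal map in a fixed homotopy class must stretch the horizontal trajectories of $\phi$ and characterizes the extremal stretch. Everything else---the ball model, properness, and the additivity of distance along $c$---is then comparatively formal once this analytic core is in hand.
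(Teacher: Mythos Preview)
The paper does not supply a proof of this theorem at all: it is stated as a classical result attributed to Teichm\"uller and is simply invoked as background (with a pointer to \cite{IT92} and \cite{Masur2010} for details on Teichm\"uller geometry). So there is nothing in the paper to compare your argument against.

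That said, your outline is the standard route and is essentially correct in spirit. The completeness argument via the homeomorphism $\Phi$ to the open unit ball in $\QD(X_0)$ and the identification of closed metric balls with compact sets $\{|\mu|\leq \tanh R\}$ is exactly how properness (and hence completeness) is usually established. For part (2), your construction of the Teichm\"uller line and the additivity of distance along it is fine; the uniqueness step is, as you correctly flag, the real content, and the Reich--Strebel inequality (or the original length--area argument) is indeed the standard tool. One small caution: the statement that ``equality in the subadditivity of maximal dilatation under composition forces both maps to be Teichm\"uller maps aligned with the same $\phi$'' is slightly stronger than what you need and slightly more delicate than it sounds; it is cleaner to argue directly that any point $\gamma(t)$ satisfying $\dist_T(p_1,\gamma(t))+\dist_T(\gamma(t),p_2)=\dist_T(p_1,p_2)$ must lie on the Teichm\"uller disk through $\phi$, using the uniqueness of the extremal map from $p_1$ to $p_2$ and the fact that composing the two extremal maps gives a quasiconformal map in the correct class with dilatation exactly $K$. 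But this is a matter of packaging rather than a gap.
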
  

A direct corollary is 

\begin{proposition}\label{contract}
Any geodesic ball of finite radius in $(\Teich(S_g), ds_T^2)$ is contractible.
\end{proposition}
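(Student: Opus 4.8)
The plan is to exploit the two properties from Theorem \ref{teich}: completeness and unique geodesics. Fix a center $p_0 \in \Teich(S_g)$ and a radius $r>0$, and let $B$ be the geodesic ball of radius $r$ centered at $p_0$ (the argument is the same for the open and the closed ball). For each $q \in B$ let $c_q : [0,1] \to \Teich(S_g)$ be the unique geodesic of Theorem \ref{teich}(2), reparametrized proportionally to arclength, with $c_q(0)=p_0$ and $c_q(1)=q$. Since $c_q$ is length-minimizing, $\dist_{T}(p_0, c_q(s)) = s\cdot\dist_{T}(p_0,q) \le \dist_{T}(p_0,q)$ for all $s\in[0,1]$, so $c_q([0,1]) \subset B$; that is, $B$ is star-shaped with respect to $p_0$. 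Define $H : B \times [0,1] \to B$ by $H(q,t) = c_q(1-t)$. Then $H(\param,0) = \id_B$, $H(\param,1) \equiv p_0$, and $H(p_0,t) = p_0$ for all $t$. Once $H$ is shown to be continuous, it is a strong deformation retraction of $B$ onto the point $p_0$, and hence $B$ is contractible.

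The one nontrivial point is the continuity of $H$, which is exactly continuous dependence of Teichm\"uller geodesics on their endpoints. Here we use that $(\Teich(S_g), ds_T^2)$ is a proper metric space: it is a complete (Theorem \ref{teich}(1)), locally compact length space, being a complete Finsler metric on the finite-dimensional manifold $\Teich(S_g)$ inducing its usual topology, so by the Hopf--Rinow theorem for length spaces its closed balls are compact. Now suppose $(q_n, t_n) \to (q,t)$ in $B \times [0,1]$. All the geodesics $c_{q_n}$ lie in the compact set $\overline{B(p_0,r)}$ and have speeds $\dist_{T}(p_0, q_n)$, which are uniformly bounded; hence $\{c_{q_n}\}$ is equicontinuous, and by Arzel\`a--Ascoli some subsequence converges uniformly to a path $c$ from $p_0$ to $q$. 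A uniform limit of length-minimizing geodesics with speeds converging to $\dist_{T}(p_0,q)$ is again a length-minimizing geodesic from $p_0$ to $q$ of that speed, so $c = c_q$ by the uniqueness in Theorem \ref{teich}(2). Since every subsequence of $\{c_{q_n}\}$ has a further subsequence converging uniformly to $c_q$, the whole sequence converges uniformly to $c_q$; together with the continuity of $c_q$ this gives $H(q_n,t_n) = c_{q_n}(1-t_n) \to c_q(1-t) = H(q,t)$, so $H$ is continuous.

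The main obstacle is this last continuity step: in a general uniquely geodesic space geodesics need not vary continuously with endpoints, so it is essential to use properness of the Teichm\"uller metric (equivalently, Mumford-type compactness) in tandem with the strong uniqueness of Theorem \ref{teich}(2). Everything else --- the star-shapedness of $B$ and the fact that a continuous star-shaped contraction makes $B$ contractible --- is formal.
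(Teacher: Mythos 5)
Your proof is correct and follows essentially the same route as the paper: both contract the ball along the unique Teichm\"uller geodesics to the center, using Theorem \ref{teich} to see that the resulting homotopy is well-defined and lands in the ball. The only difference is that the paper simply asserts the continuity of the homotopy, whereas you justify it via properness of $(\Teich(S_g), ds_T^2)$ and an Arzel\`a--Ascoli argument; that justification is sound and fills in the step the paper leaves implicit.
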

\begin{proof}
Let $p \in (\Teich(S_g), ds_T^2)$ and $r>0$. Consider the geodesic ball 
$$B(p;r):=\{q; \ \dist_T(p,q)\leq r\} \subset (\Teich(S_g), ds_T^2).$$

For any $z\in B(p;r)$, by Theorem \ref{teich} we know that there exists a unique geodesic $c_z:[0, \dist_T(p,z)]\rightarrow B(p;r)$ such that $c_z(0)=p$ and $c_z(\dist_T(p,z))=z$. Here we use the arc-length parameter for $c$. Then we consider the following map
\begin{eqnarray*}
H: B(p;r)\times [0,1]&\rightarrow& B(p;r)\\
(z,t) &\mapsto& c_z(t\cdot \dist_T(p,z)).
\end{eqnarray*}
Theorem \ref{teich} tells us that $H$ is well-defined and continuous. 

It is clear that 
$$H(z,0)=p  \quad \mathrm{and} \quad  H(z,1)=z \quad \forall z\in B(p;r).$$
That is, $B(p,r)$ is contractible.
\end{proof}
For more details on Teichm\"uller geometry, one can refer to the book \cite{IT92} and the recent survey \cite{Masur2010} for more details.\\

The \textit{Weil-Petersson metric} $ds_{WP}^2$ is the Hermitian
metric on $T_{g}$ arising from the the \textsl{Petersson scalar  product}
\begin{equation}
 <\varphi,\psi>_{ds_{WP}^2}= \int_X \frac{\varphi(z) \cdot \overline{\psi(z)}}{\sigma(z)}\frac{dz\wedge d\overline{z}}{-2\textbf{i}} \nonumber
\end{equation}
via duality.
The Weil-Petersson metric is K\"ahler (\cite{Ahlfors61}), incomplete (\cite{Chu76, Wolpert75}) and has negative sectional curvature (\cite{Wolpert86, Tromba86}). One can refer to Wolpert's recent book \cite{Wolpertbook} for the progress on the study of the Weil-Petersson metric. 

Both the Teichm\"uller metric and the Weil-Petersson metric are $\Mod(S_g)$-invariant.\\

Let $ds_1^2$ and $ds_2^2$ be any two Riemannian metrics on $\Teich(S_g)$. If there exists a constant $k>0$ such that 
$$ds_1^2 \geq k \cdot ds_2^2,$$
then we write 
$$ds_1^2 \succ ds_2^2.$$ 

We call the two metrics $ds_1^2$ and $ds_2^2$ are $\textit{bi-Lipschitz}$ (or $\textit{equivalent}$) if 
$$ds_1^2 \succ ds_2^2 \quad \textit{and} \quad ds_2^2 \succ ds_1^2$$  
which is denoted by 
$$ds_1^2 \asymp ds_2^2.$$

It is not hard to see that the Cauchy-Schwarz inequality and the Gauss-Bonnet formula gives that 
$$ds_{T}^2 \succ ds_{WP}^2.$$ 
However, since the Weil-Petersson metric is incomplete and the Teichm\"uller metric is complete,  we have 
$$ds_{WP}^2\nsucc ds_{T}^2.$$  

\section{Universal properties of Riemannian metrics  equivalent to $ds_T^2$}\label{universal}

It is shown in \cite{LSY04, LSY05, McMullen00} that the asymptotic Poincar\'e metric, the induced Bergman metric, K\"ahler-Einstein metric, the McMullen metric, the Ricci metric, and the perturbed Ricci metric are all K\"ahler and equivalent to the Teichm\"uller metric. Actually for any metric $ds^2$ in the convex hull of all these metrics we have $ds^2 \asymp ds_{T}^2$. We are going to apply one of these metrics as bridges to prove Theorem \ref{mt-1}. We remark here that certain universal properties of  the six metrics above are enough in this article. We do not need certain special property of some metric in the list above.

 \subsection{K\"ahler metrics on $\mathbb{M}_g$} In this subsection we briefly review some properties of the following two K\"ahler metrics $\mathbb{M}_g$: the Ricci metric and the perturbed Ricci metric. They will be applied to prove Theorem \ref{mt-1}.
\subsubsection{The Ricci metric and the perturbed Ricci metric}

In \cite {Tromba86, Wolpert86} it is shown that the Weil-Petersson metric has negative sectional curvature. The negative Ricci curvature tensor defines a new metric $ds_{\tau}^2$ on $\mathbb{M}_g$, which is called the $\textit{Ricci metric}$. Trapani in \cite{Trapani92} proved $ds_{\tau}^2$ is a complete K\"ahler metric. In \cite{LSY04} Liu-Sun-Yau perturbed the Ricci metric with the Weil-Petersson metric to give new metrics on $\mathbb{M}_g$ which are called the \textsl{perturbed Ricci metrics}, denoted by $ds_{LSY}^2$. More precisely, let $\omega_{\tau}$ be the K\"ahler form of the Ricci metric, for any constant $C>0$, the K\"ahler form of the perturbed Ricci metric is

$$\omega_{LSY}=\omega_{\tau}+C\cdot \omega_{WP}.$$

Motivated by the results of McMullen in \cite{McMullen00},  K. Liu, X. Sun and S. T. Yau  in \cite{LSY04} showed that

\begin{theorem}[Liu-Sun-Yau]\label{lsy-04}
On the moduli space $\mathbb{M}_g$, both $(\mathbb{M}_g,ds_{\tau}^2)$ and $(\mathbb{M}_g,ds_{LSY}^2)$ satisfy

(1). They have bounded sectional curvatures and finite volumes.   

(2). $ ds_\tau^2 \asymp ds_{LSY}^2 \asymp ds_{T}^2 .$

(3). There exists a constant $\epsilon_0>0$ such that the injectivity radii of the universal covers satisfy that 
$$\inj(\Teich(S_g),ds_\tau^2)\geq \epsilon_0>0$$ 
and 
$$\inj(\Teich(S_g),ds_{LSY}^2)\geq \epsilon_0>0.$$
\end{theorem}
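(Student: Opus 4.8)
The plan is to follow Liu--Sun--Yau \cite{LSY04, LSY05} (completeness of $ds_\tau^2$ is also due to Trapani \cite{Trapani92}) and to reduce every assertion to a uniform estimate of one of two kinds: an \emph{interior} estimate over a fixed thick part $\mathbb{M}_g^{\geq\epsilon}$, which is relatively compact, and an \emph{asymptotic} estimate over a neighborhood of each stratum of the Deligne--Mumford boundary $\partial\overline{\mathbb{M}}_g$. Since $ds_\tau^2$ and $ds_{LSY}^2=ds_\tau^2+C\,ds_{WP}^2$ are $\Mod(S_g)$-invariant, it is enough to work on $\mathbb{M}_g$ and then read off the statements about the universal cover $\Teich(S_g)$.

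First I would dispose of the interior: on $\mathbb{M}_g^{\geq\epsilon}$ the Weil--Petersson metric is a smooth K\"ahler metric of negative sectional curvature (Wolpert \cite{Wolpert86}, Tromba \cite{Tromba86}), and Wolpert's curvature formula shows that $-\Ric(\omega_{WP})$ is a smooth negative $(1,1)$-form there; hence $ds_\tau^2$, and then $ds_{LSY}^2$, is a smooth K\"ahler metric, uniformly comparable to $ds_{WP}^2$ and with all curvatures bounded, by compactness of $\mathbb{M}_g^{\geq\epsilon}$. The real work is near $\partial\overline{\mathbb{M}}_g$. There I would pass to plumbing coordinates $(t_1,\dots,t_k,w_{k+1},\dots,w_n)$, $n=3g-3$, adapted to pinching disjoint simple closed curves $\gamma_1,\dots,\gamma_k$, with $u_j:=-\pi/\log|t_j|\asymp\ell_{\gamma_j}\to 0$, and use Wolpert's asymptotic expansion of the Weil--Petersson metric, which in the cuspidal directions reads
$$
g_{WP}\;\asymp\;\sum_{j=1}^{k}\frac{u_j^{3}}{|t_j|^{2}}\,|dt_j|^{2}\;+\;(\text{bounded non-degenerate part}),
$$
together with the finer expansions --- with controlled error terms --- for the Christoffel symbols, the Weil--Petersson curvature tensor, and its first and second covariant derivatives. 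Substituting the curvature expansion into $-\Ric(\omega_{WP})$ should give
$$
g_\tau\;\asymp\;\sum_{j=1}^{k}\frac{|dt_j|^{2}}{|t_j|^{2}(\log|t_j|)^{2}}\;+\;(\text{bounded non-degenerate part}),
$$
and, since the Weil--Petersson term is of strictly lower order in the cuspidal directions, $g_{LSY}=g_\tau+C\,g_{WP}$ has the same leading behaviour.

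Granting these expansions, the stated properties follow by comparison geometry. The model term $\sum_j|dt_j|^{2}/|t_j|^{2}(\log|t_j|)^{2}$ is at infinite distance from, and of finite volume near, $\{t=0\}$, so with the interior step this gives completeness and finite total volume of $(\mathbb{M}_g,ds_\tau^2)$ and $(\mathbb{M}_g,ds_{LSY}^2)$. By its very definition (the $\mathrm{Log}(\epsilon/\ell_\gamma)$ terms recalled above) the McMullen metric $ds_M^2$ has exactly the same leading boundary behaviour, so matching the boundary comparison with the thick-part comparison over the finitely many strata gives $ds_\tau^2\asymp ds_{LSY}^2\asymp ds_M^2$, hence $\asymp ds_T^2$ by Theorem \ref{McM00}(2). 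For bounded sectional curvature I would compute the curvature of $g_\tau$, resp.\ of $g_\tau+C\,g_{WP}$, from Wolpert's expansions, observing that in suitable quasi-coordinate charts near the boundary --- fixed-size polydiscs mapped into $\Teich(S_g)$ by composing the plumbing coordinates with the covering maps $\Delta\to\Delta^{*}$ in the cuspidal directions --- the metric is uniformly bounded in $C^{2}$ with uniformly bounded inverse, while over the thick part this is automatic. The same uniform atlas of quasi-coordinates on $\Teich(S_g)$ (equivalently, bounded geometry in McMullen's sense) then forces $\inj(\Teich(S_g),ds_\tau^2)\geq\epsilon_0>0$, and likewise for $ds_{LSY}^2$; alternatively one combines $ds_\tau^2\asymp ds_M^2$, the bounded-curvature estimate, and McMullen's Theorem \ref{McM00}(3) via a standard comparison argument.

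The hard part will be the asymptotic step: producing the expansions \emph{with explicit error control} for the Weil--Petersson curvature tensor and its covariant derivatives near \emph{every} boundary stratum, in coordinates compatible with the plumbing construction, and carrying out the bookkeeping when several pinching directions occur simultaneously. Once those are in place, completeness, finite volume, the bi-Lipschitz comparisons, and the curvature and injectivity-radius bounds are all routine assembly of these estimates together with the thick-part information.
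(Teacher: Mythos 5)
This statement is quoted in the paper as an external result of Liu--Sun--Yau (with Trapani's completeness result in the background); the paper supplies no proof of its own beyond the citations to \cite{LSY04, LSY05, Trapani92}, so there is no internal argument to compare against. Your outline is a faithful reconstruction of the strategy of those references: the thick/thin decomposition, Wolpert's expansions of $\omega_{WP}$ and its curvature in plumbing coordinates, the resulting asymptotic-Poincar\'e behaviour $\sum_j |dt_j|^2/|t_j|^2(\log|t_j|)^2$ of $g_\tau$ (with $g_{WP}$ of strictly lower order, so $g_{LSY}$ shares the leading term), the matching leading behaviour of $g_M$ coming from the $\partial\overline\partial\,\mathrm{Log}(\epsilon/\ell_\gamma)$ terms, and quasi-coordinates for the bounded-geometry and injectivity-radius statements (the latter also obtainable from $ds_\tau^2\asymp ds_M^2$ plus bounded curvature via a Cheeger--Gromov--Taylor type argument). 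The one caveat is the one you name yourself: the entire analytic content --- the curvature expansions of $\omega_{WP}$ with controlled error terms near every boundary stratum, including the multi-cusp bookkeeping --- is deferred, so as written this is a correct proof plan rather than a proof; for the purposes of this paper that is exactly the status the theorem already has, namely an imported result whose proof lives in \cite{LSY04, LSY05}.
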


\subsection{Asymptotic dimension}
Gromov in \cite{Gromov93} introduced the notion of \textsl{asymptotic dimension} as a large-scale analog of the covering dimension. 
More precisely, a metric space $X$ has asymptotic dimension $\asydim(X)\leq n$ if for every $R>0$ there is a cover of $X$ by uniformly bounded sets such that every metric $R$-ball intersects at most $n+1$ of sets in the cover. One can refer to Theorem 19 in  \cite{BD08} for some other equivalent definitions of the asymptotic dimension. By using Minsky's product theorem in \cite{Minsky96} for the thin part of the Teichm\"uller space $(\Teich(S_g), ds_T^2)$, recently M. Bestvina, K. Bromberg and K. Fujiwara in \cite{BBF14} proved the following result which is crucial for this paper.

\begin{theorem}[Bestvina-Bromberg-Fujiwara]\label{asyofteich}
Let $S_g$ be a closed surface of genus $g$ with $g\geq 1$. Then the Teichm\"uller space, endowed with the Teichm\"uller metric, satisfies
$$\asydim((\Teich(S_g), ds_T^2))<\infty.$$
\end{theorem}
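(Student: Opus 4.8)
The plan is to argue by induction on the complexity of $S_g$; more precisely, one proves the statement for all surfaces of finite type, cutting along curves in the inductive step (which is why punctured surfaces must be allowed), and bottoms out at the surfaces $S$ for which the Teichm\"uller space is either a single point or, with the Teichm\"uller metric, quasi-isometric to the hyperbolic plane $\Hy^2$ -- in both cases the asymptotic dimension is finite. For the inductive step, fix a Margulis-type constant $\epsilon>0$, let $\mathcal{T}^{\geq\epsilon}\subset\Teich(S_g)$ be the locus where the systole is at least $\epsilon$, and for each multicurve $\Gamma$ on $S_g$ let $U_\Gamma\subset\Teich(S_g)$ be the region where every component of $\Gamma$ has hyperbolic length $<\epsilon$, so that $\Teich(S_g)=\mathcal{T}^{\geq\epsilon}\cup\bigcup_\Gamma U_\Gamma$. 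It then suffices to bound the asymptotic dimension of each piece uniformly and to glue the pieces together.

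For the thin parts I would invoke Minsky's product regions theorem \cite{Minsky96}: with quasi-isometry constants depending only on the complexity of $S_g$, the region $(U_\Gamma,ds_T^2)$ is quasi-isometric to the $\ell^\infty$-metric product $\big(\prod_{\gamma\in\Gamma}\Hy_\gamma\big)\times\Teich(S_g\setminus\Gamma)$, where each $\Hy_\gamma$ is a horoball-type region in the hyperbolic plane coming from the length and twist parameters of the short curve $\gamma$, and $S_g\setminus\Gamma$ has strictly smaller complexity. Using $\asydim(X\times Y)\leq\asydim(X)+\asydim(Y)$ for metric products, $\asydim(\Hy_\gamma)\leq 2$, and the inductive hypothesis, one gets $\asydim(U_\Gamma)\leq 2|\Gamma|+\asydim(\Teich(S_g\setminus\Gamma))\leq N$ for a constant $N$ that depends only on the complexity of $S_g$ and not on $\Gamma$.

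The thick part and the assembly of all the pieces is where the substance lies, and where the projection-complex technology of \cite{BBF14} enters. By Mumford compactness, $\Mod(S_g)$ acts properly discontinuously and cocompactly on $\mathcal{T}^{\geq\epsilon}$, so coarsely $\mathcal{T}^{\geq\epsilon}$ is the mapping class group and the thick part contributes $\asydim(\Mod(S_g))$; Rafi's combinatorial formula for $ds_T^2$ then shows that, beyond the word metric on $\Mod(S_g)$, the Teichm\"uller metric only sees the finitely many ``length coordinates'' $\log^+(1/\ell_\gamma)$ -- precisely the $\Hy_\gamma$ factors above -- so a bound on $\asydim(\Mod(S_g))$ upgrades to a bound on $\asydim(\Teich(S_g),ds_T^2)$. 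To bound $\asydim(\Mod(S_g))$ one uses the Masur--Minsky distance formula, which renders the marking complex (quasi-isometric to $\Mod(S_g)$) coarsely a ``sum'' over essential subsurfaces $W\subseteq S_g$ of the subsurface-projection distances $d_W$. For each fixed topological type of subsurface, the family of curve complexes $\{\mathcal{C}(W)\}$ equipped with the Masur--Minsky projections satisfies the projection axioms of \cite{BBF14}, hence assembles into a quasi-tree of metric spaces whose asymptotic dimension is controlled by the asymptotic dimension of the individual $\mathcal{C}(W)$ (finite by the induction on complexity; this is what makes the whole scheme genuinely recursive) together with the finite dimension of the underlying projection complex. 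Running over the finitely many topological types of subsurfaces and applying a finite-union (coloring) theorem for asymptotic dimension then gives $\asydim(\Mod(S_g))<\infty$, and hence the theorem.

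I expect the main obstacle to be the passage from these coarse ``product/sum'' descriptions to an honest bound on asymptotic dimension, for two intertwined reasons. First, at every scale $R$ one must color the infinitely many thin parts $U_\Gamma$ (and, in the group argument, the infinitely many subsurfaces $W$ of a given type) with a bounded number of colors so that any two pieces of the same color are $R$-separated; this rests on the $\delta$-hyperbolicity and finite asymptotic dimension of the curve complex, on Behrstock's inequality for subsurface projections, and on the combinatorial fact that simultaneously $\epsilon$-short multicurves span uniformly bounded configurations. Second, the argument is genuinely recursive -- curve complexes and Teichm\"uller spaces of proper subsurfaces feed into the estimate for $S_g$ -- so every constant (the $N$ above, the dimension of the projection complex, the number of colors) must be propagated uniformly through the induction on complexity, which is the bookkeeping core of \cite{BBF14}.
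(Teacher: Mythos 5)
The paper does not actually prove Theorem \ref{asyofteich}: it is quoted as an external input, attributed to Bestvina--Bromberg--Fujiwara \cite{BBF14} (with only the remark that Minsky's product regions theorem \cite{Minsky96} is what handles the thin part), and everything downstream (Theorem \ref{finite}, hence Theorems \ref{mt-1} and \ref{mt-2}) uses only the statement. So there is no in-paper argument to compare yours against; what can be assessed is whether your sketch is a faithful and viable account of the proof in the literature. At the level of a roadmap it is: the induction on complexity with base cases where $\Teich(S_g)$ is a point or quasi-isometric to $\Hy^2$, Minsky's product regions for the thin parts together with the product formula $\asydim(X\times Y)\leq\asydim(X)+\asydim(Y)$, Rafi's distance formula relating the thick part to the word metric on $\Mod(S_g)$, the verification via Behrstock's inequality that subsurface projections satisfy the projection axioms, the assembly of the curve complexes of a fixed topological type into a quasi-tree of metric spaces, and the finiteness of $\asydim$ of curve complexes (Bell--Fujiwara) are exactly the ingredients of the actual argument.

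That said, what you have written is an annotated reading guide rather than a proof, and the one place where you commit to a specific architecture --- decompose $\Teich(S_g)$ into $\mathcal{T}^{\geq\epsilon}$ and the thin pieces $U_\Gamma$, bound each piece, then ``glue'' by a finite-union/coloring theorem --- is precisely where the scheme as stated would be hardest to push through. The regions $U_\Gamma$ overlap heavily (wherever several multicurves are simultaneously short) and cannot be $r$-separated into boundedly many colors by any soft argument, and the infinite union theorem for asymptotic dimension requires producing, for each $r$, a uniform core $Y_r$ off of which the pieces are $r$-disjoint; your sketch names this obstacle but does not resolve it. The argument in \cite{BBF14} sidesteps it: Rafi's formula yields a single quasi-isometric embedding of $(\Teich(S_g),ds_T^2)$ into a finite product of quasi-trees of spaces (with horoball-like factors playing the role of your $\Hy_\gamma$ for annular subsurfaces), and one concludes by monotonicity of $\asydim$ under quasi-isometric embeddings plus the product formula, the only delicate dimension estimate being the Hurewicz-type theorem for the projection from a quasi-tree of spaces to its underlying quasi-tree. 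Identifying the hard steps is not the same as carrying them out; as a self-contained proof of Theorem \ref{asyofteich} the proposal is incomplete, though as a description of where the theorem comes from it is accurate and consistent with the paper's citation.
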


From the definition of the asymptotic dimension it is not hard to see that the asymptotic dimension is a quai-isometric invariance. For more details, one can see the remark on page 21 of \cite{Gromov93} or Proposition 22 in \cite{BD08}. 
\begin{theorem}\label{finite}
Let $ds^2$ be a Riemannian metric on $\Teich(S_g)$ with $ds^2 \asymp ds_{T}^2$. Then,
\begin{equation}\label{finite-1}
\asydim((\Teich(S_g),ds^2))<\infty.
\end{equation}
In particular, for the perturbed Ricci metric $d_{LSY}^2$ we have 
\begin{eqnarray}\label{finite-2}
\asydim((\Teich(S_g),ds_{LSY}^2))<\infty.
\end{eqnarray}
\end{theorem}

\begin{proof}
Since the asymptotic dimension is a quasi-isometry invariant of $\Teich(S_g)$, it is clear that inequality (\ref{finite-1}) follows from Theorem \ref{asyofteich}, and inequality (\ref{finite-2}) follows from Part (2) of Theorem \ref{lsy-04} and inequality (\ref{finite-1}).
\end{proof}

\section{Deformation to Positive Scalar curvature}\label{dtpsc}
As stated in the introduction Farb and Weinberger in \cite{FW-scalar} showed that the set of complete Riemannian metrics of positive scalar curvatures on the moduli space $\mathbb{M}_g$ is not empty. In this section we will show that any complete Riemannian metric of nonnegative scalar curvature on a manifold which finitely covers $\mathbb{M}_g$ can be deformed to a new complete Riemannian metric of positive scalar curvature which is equivalent to the ambient metric. This will be applied to prove Theorem \ref{mt-1}. 

In \cite{Kazdan82} Kazdan showed that any Riemannian metric of zero scalar curvature on a manifold, whose dimension is greater than or equal to $3$, can be deformed to a new metric of positive scalar curvature which is equivalent to the ambient metric provided that the ambient metric is not Ricci flat. Actually his method also works when the scalar curvature is nonnegative. This argument will be used in this section to prove Theorem \ref{deform}. One can see \cite{Kazdan82} for more details.

Let $M$ be a finite cover of the moduli space $\mathbb{M}_g$ and $ds^2$ be a complete Riemannian metric on $M$. Since $M$ may be an orbiford, the Riemannian metric $ds^2$ on $M$ means a Riemannian metric on the Teichm\"uller space $\Teich(S_g)$ on which the orbiford fundamental group $\tilde{\pi_{1}}(M)$ acts on $(\Teich(S_g), ds^2)$ by isometries. It is known that the mapping class group $\Mod(S_g)$ contains torsion-free subgroups of finite indices (see \cite{FM-mcg}). We can pass to a finite cover $\overline{M}$ of $M$ such that $\overline{M}$ is a manifold. It is clear that the fundamental group $\pi_1(\overline{M})$ is a torsion-free subgroup of $\Mod(S_g)$ of finite index. 

In this section we will prove the following result.
\begin{theorem}\label{deform}
Let $S_{g}$ be a closed surface of genus $g$ with $g\geq 2$ and $M$ be a finite cover of the moduli space $\mathbb{M}_{g}$ of $S_{g}$ such that $M$ is a manifold. Then for any complete Riemannian metric $ds^2$ of nonnegative scalar curvature on $M$, there exists a new metric $ds_1^2$ on $M$ such that

(1). The scalar curvature $\Sca_{(M,ds_1^2)}>0$ on $(M,ds_1^2)$.

(2). $ds_1^2 \asymp ds^2.$
\end{theorem}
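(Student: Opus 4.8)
The plan is to combine Theorem \ref{norf} (no Ricci-flat metrics) with Theorem \ref{Kazdan-4-1} (Kazdan's deformation result), via a verification that the lowest Neumann eigenvalue $\mu_1(B(p_0;r))$ is strictly positive for a suitably chosen ball. First I would recall that $ds^2$ has nonnegative scalar curvature, so the formula \eqref{fev} for $\mu_1(B(p_0;r))$ has a nonnegative integrand $\|\grad v\|_{ds^2}^2 + \Sca_{ds^2}\cdot v^2$, hence $\mu_1(B(p_0;r))\geq 0$ for every $p_0$ and every $r<\inj(p_0)$. So the only thing to rule out is that $\mu_1(B(p_0;r))=0$ for all choices of ball. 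The variational characterization shows that $\mu_1(B(p_0;r))=0$ forces the infimum to be attained by a constant function (the gradient term must vanish and the scalar-curvature term must vanish), which in turn forces $\Sca_{ds^2}\equiv 0$ on $B(p_0;r)$; if this held on balls covering all of $M$, then $\Sca_{ds^2}\equiv 0$ on $M$.

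Next I would handle the case $\Sca_{ds^2}\equiv 0$ on $M$. Here the classical fact (going back to the Kazdan-Warner / Bourguignon line of reasoning, and used in \cite{Kazdan82}) is that a metric with identically zero scalar curvature that is \emph{not} Ricci-flat can still be conformally deformed to positive scalar curvature. Concretely, when $\Sca_{ds^2}\equiv 0$ the operator \eqref{opera} reduces to a multiple of $-\Delta_{ds^2}$, and one shows that a small perturbation of the metric in the direction of the traceless Ricci tensor strictly increases the scalar curvature at some point while keeping it nonnegative, reducing to the previous case; this is exactly where Theorem \ref{norf} enters, guaranteeing the existence of $p_0$ with $\Ric_{(M,ds^2)}(p_0)\neq 0$. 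Thus in all cases we can arrange a ball $B(p_0;r_1)$ with smooth boundary, $r_1<\inj(p_0)$, and $\mu_1(B(p_0;r_1))>0$ — possibly after replacing $ds^2$ by an equivalent metric with the same properties.

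With $\mu_1(B(p_0;r_1))>0$ in hand, Theorem \ref{Kazdan-4-1} produces a function $u$ with $0<C_1\leq u\leq C_2$ on $(M,ds^2)$ and $\L_{ds^2}u>0$. Define $ds_1^2:=ds_u^2=u^{2/(3g-4)}\cdot ds^2$ as in \eqref{4-1}. By \eqref{4-3} the scalar curvature is $\Sca_{ds_1^2}=\L_{ds^2}u\cdot u^{-(3g-2)/(3g-4)}>0$ pointwise on $M$, giving (1). For (2), the two-sided bound $C_1\leq u\leq C_2$ gives $C_1^{2/(3g-4)}\,ds^2\leq ds_1^2\leq C_2^{2/(3g-4)}\,ds^2$ (note $3g-4\geq 2>0$ since $g\geq 2$), hence $ds_1^2\asymp ds^2$.

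The main obstacle I anticipate is the case $\Sca_{ds^2}\equiv 0$: one cannot conformally deform a Ricci-flat (hence scalar-flat) metric to positive scalar curvature, so the argument genuinely needs the first-order metric perturbation supplied by Theorem \ref{norf}, and one must be careful that this auxiliary perturbation stays within the equivalence class of $ds^2$ and preserves completeness and nonnegativity of the scalar curvature before reapplying the eigenvalue argument. All the other steps are routine given Theorems \ref{norf}, \ref{Kazdan-4-1} and formula \eqref{4-3}.
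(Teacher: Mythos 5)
Your proposal is correct and follows essentially the same route as the paper: Theorem \ref{norf} supplies a point $p_0$ with $\Ric_{(M,ds^2)}(p_0)\neq 0$, Kazdan's first-variation formula for the Neumann eigenvalue under the compactly supported perturbation $ds_t^2=ds^2-t\,\eta\,\Ric$ yields $\mu_1(B(p_0;r_1),t_0)>0$ for small $t_0$ (using $\mu_1(B(p_0;r_1),0)\geq 0$, which follows from $\Sca_{ds^2}\geq 0$), and then Theorem \ref{Kazdan-4-1} together with the conformal factor $u^{2/(3g-4)}$, $0<C_1\leq u\leq C_2$, gives both conclusions exactly as you describe. The one caveat is that your intermediate claim that the Ricci perturbation keeps the scalar curvature pointwise nonnegative is neither guaranteed (the linearized scalar curvature contains divergence terms that can be negative somewhere) nor needed, since Kazdan's Theorem A only requires $\mu_1>0$ on the ball; for this reason the paper dispenses with your case split on whether $\Sca_{ds^2}\equiv 0$ and runs the eigenvalue perturbation uniformly.
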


Before we go to prove the theorem above. First let us provide the following fact in the moduli space.
\begin{lemma}\label{norf}
Let $S_{g}$ be a closed surface of genus $g$  with $g\geq 2$ and $M$ be a finite cover of the moduli space $\mathbb{M}_{g}$ of $S_{g}$. Then, for any complete Riemannian metric $ds^2$ on $M$, there exists a point $p_0 \in M$ such that the Ricci tensor at $p_0$ satisfies
$$\Ric_{(M,ds^2)}(p_0)\neq 0.$$
\end{lemma}

\begin{proof}
The following argument is standard. One may see \cite{Gromovbook} for more applications of this argument. 

We argue it by contradiction. Suppose it is not. That is, there exists a complete Riemannian metric $ds^2$ on $M$ such that for all $p\in M$ the Ricci tensor 
$$\Ric_{(M,ds^2)}(p)=0.$$ 

As described above, if necessary we pass to a finite cover $\overline{M}$ such that $\overline{M}$ is a manifold. We lift the metric $ds^2$ onto $\overline{M}$, still denoted by $ds^2$. Then,
\begin{equation}\label{eqa-flat}
\Ric_{(\overline{M},ds^2)}(p)=0.
\end{equation}

Let $\alpha, \beta$ be two nontrivial simple closed curves on $S_g$ with the geometric intersection $i(\alpha, \beta)\geq 2$, and $\tau_{\alpha}, \tau_{\beta}$ be the Dehn-twists along $\alpha$ and $\beta$ respectively. Since the fundamental group $\pi_1(\overline{M})$ is a subgroup of $\Mod(S_g)$ of finite index, there exists $n_0, m_0 \in \mathbb{Z}^+$ such that  $\tau_{\alpha}^{n_0}, \tau_{\beta}^{m_0}\in \pi_1(\overline{M}).$ From Lemma \ref{dt-f} we know that the group 
\begin{equation}\label{free}
<\tau_{\alpha}^{n_0}, \tau_{\beta}^{m_0}>\cong \mathbb{F}_2
\end{equation}
where $\mathbb{F}_2$ is a free group of rank 2. For sure $<\tau_{\alpha}^{n_0}, \tau_{\beta}^{m_0}>$ acts on the universal cover $(\Teich(S_g), ds^2)$ of $(\overline{M},ds^2)$ by isometries.

We endow $<\tau_{\alpha}^{n_0}, \tau_{\beta}^{m_0}>$ with the word metric $dist_{word}$ w.r.t the generator set $\{\tau_{\alpha}^{n_0},\tau_{\alpha}^{-n_0}, \tau_{\beta}^{m_0}, \tau_{\beta}^{-m_0}\}$. Let $e$ be the unit in $<\tau_{\alpha}^{n_0}, \tau_{\beta}^{m_0}>$. For any $r>0$ we set
\begin{eqnarray*}
B(e, r):=\{\phi \in <\tau_{\alpha}^{n_0}, \tau_{\beta}^{m_0}>: \quad dist_{word}(\phi,e)\leq r\}.
\end{eqnarray*}

Let $q_0 \in  (\Teich(S_g), ds^2)$ and $dist_{ds^2}$ be the induced path metric of $ (\Teich(S_g), ds^2)$ on $\Teich(S_g)$. We define
$$C:=\max{\{dist_{ds^2}(\tau_{\alpha}^{n_0}\circ q_0, q_0), dist_{ds^2}(\tau_{\beta}^{m_0}\circ q_0, q_0)\}}>0.$$

The triangle inequality leads to
\begin{eqnarray}\label{3-1}
dist_{ds^2}(\phi \circ q_0, q_0)\leq r\cdot C, \quad \forall \phi \in B(e, r).
\end{eqnarray}

Since $<\tau_{\alpha}^{n_0}, \tau_{\beta}^{m_0}>$ acts freely on the universal cover $ (\Teich(S_g), ds^2)$ of $(\overline{M},ds^2)$, there exists a number $\epsilon_0>0$  such that 
$$dist_{ds^2}(\gamma \circ q_0, q_0)>2\epsilon_0, \quad \forall e\neq \gamma \in  <\tau_{\alpha}^{n_0}, \tau_{\beta}^{m_0}>$$
which implies 
\begin{eqnarray}\label{3-2}
\gamma_{1}\circ B(q_0;\epsilon_0)\cap \gamma_{2}\circ B(q_0;\epsilon_0)=\emptyset, \quad \forall \gamma_{1} \neq \gamma_{2} \in <\tau_{\alpha}^{n_0}, \tau_{\beta}^{m_0}>
\end{eqnarray}
where $B(q_0;\epsilon_0):=\{p\in(\Teich(S_g), ds^2); \ dist_{ds^2}(p,q_0)\leq \epsilon_0\}$.

From inequality (\ref{3-1}) and the triangle inequality we know that, for all $r>0$,
\begin{eqnarray}\label{3-3}
\bigcup_{\gamma \in B(e,r)}\gamma \circ B(q_0;\epsilon_0)\subset B(q_0;r\cdot C+\epsilon_0).
\end{eqnarray}

Equation (\ref{3-2}) tells that the geodesic balls $\{\gamma \circ B(q_0;\epsilon_0)\}_{\gamma \in B(e,r)}$ are pairwisely disjoint. Thus, by taking the volume, equations (\ref{3-2}) and (\ref{3-3}) lead to

\begin{eqnarray}\label{3-4}
\sum_{\gamma \in B(e,r)}\Vol(\gamma \circ B(q_0,\epsilon_0)) &=&  \Vol(\bigcup_{\gamma \in B(e,r)}\gamma \circ B(q_0;\epsilon_0)) \nonumber \\
&\leq& \Vol(B(q_0; r\cdot C+\epsilon_0)).
\end{eqnarray}

Since the group $<\tau_{\alpha}^{n_0}, \tau_{\beta}^{m_0}>$ acts on $ (\Teich(S_g), ds^2)$ by isometries, $\Vol(\gamma \circ B(q_0,\epsilon_0))=\Vol(B(q_0; \epsilon_0))$ for all $\gamma \in <\tau_{\alpha}^{n_0}, \tau_{\beta}^{m_0}>$. From inequality (\ref{3-4}) we have 

\begin{eqnarray}\label{3-5}
\#B(e,r)\cdot \Vol(B(q_0,\epsilon_0))\leq \Vol(B(q_0,r\cdot C+\epsilon_0)).
\end{eqnarray}

Rewrite it as
\begin{eqnarray}\label{3-6}
\#B(e, r)\leq \frac{\Vol(B(q_0;r\cdot C+\epsilon_0))}{\Vol(B(q_0;\epsilon_0))}.
\end{eqnarray}

Since $(\Teich(S_g),ds^2)$ is complete, from equation (\ref{eqa-flat}) and the Gromov-Bishop volume comparison inequality (see \cite{Gromovbook}), we have, for all $r>0$,
\begin{eqnarray}
\#B(e, r)&\leq &\frac{\Vol(B(q_0; r\cdot C+\epsilon_0))}{\Vol(B(q_0;\epsilon_0))}\\
&\leq& \frac{(r\cdot C+\epsilon_0)^{6g-6}}{\epsilon_0^{6g-6}}.
\end{eqnarray}
Which in particular implies that the group $<\tau_{\alpha}^{n_0}, \tau_{\beta}^{m_0}>\subset \Mod(S_g)$ has polynomial growth, which contradicts equation (\ref{free}) since the free group $\mathbb{F}_2$ has exponential growth.
\end{proof}

Let $M$ be a finite cover of $\mathbb{M}_g$ which is a manifold and $ds^2$ be a complete Riemannian metric on $M$ which has nonnegative scalar curvature. Since the metric is smooth, for any $p_0 \in M$ there exists a constant $r_1>0$ such that the geodesic ball $B(p_0;r_1)$ centered at $p_0$ of radius $r_1$ has smooth boundary $\partial B(p_0;r_1)$ and smooth outer normal derivative $\frac{\partial}{\partial \nu}$ on $\partial B(p_0,r_1)$. It suffices to choose $r_1$ to be less than the injectivity radius of $M$ at $p_0$. 

We let $\Sca_{ds^2}$ be the scalar curvature of $(M,ds^2)$ and $\Delta_{ds^2}$ be the Laplace operator of $(M,ds^2)$. Consider the operator
\begin{equation}\label{opera}
\L_{ds^2} (u):=-\frac{2(6g-7)}{3g-4} \Delta_{ds^2} u + \Sca_{ds^2} \cdot u
\end{equation}
where $u \in C^{\infty}((M,ds^2), \mathbb{R}).$

For $0<r<\inj(p_0)$ where $\inj(p_0)$ is the injectivity radius of $(M,ds^2)$ at $p_0$. Let $\mu_1(B(p_0;r))$ be the lowest eigenvalue of $\L$ with Neumann boundary conditions $\frac{\partial u}{\partial \nu}=0$ on $\partial B(p_0;r)$. It is well-known that 
\begin{equation}\label{fev}
\mu_1(B(p_0;r))=\inf_{v\in  C^{\infty}((M,ds^2), \mathbb{R})}\frac{\int_{B(p_0;r)}(||\grad v||_{ds^2}^2+ \Sca_{ds^2} \cdot v^2 )\dVol}{\int_{B(p_0;r)}v^2 \dVol}.
\end{equation}

The following result is Theorem A in \cite{Kazdan82} which is crucial in this section.  
\begin{theorem}[Kazdan]\label{Kazdan-4-1}
Assume that $\mu_1(B(p_0;r))>0$. Then there is a solution $u>0$ on $(M,ds^2)$ of $\L u>0$; in fact there exist two constants $C_1, C_2>0$ such that 
$$0<C_1\leq u(p) \leq C_2, \quad \forall p \in (M,ds^2).$$ 
\end{theorem}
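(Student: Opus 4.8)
The plan is to produce $u$ by the classical exhaustion-and-limit scheme for the linear problem attached to $\L_{ds^2}$, using the sign hypothesis $\Sca_{ds^2}\geq 0$ to run the maximum principle. Set $n:=6g-6$ (so $n\geq 3$ since $g\geq 2$) and note that $\tfrac{2(6g-7)}{3g-4}=\tfrac{4(n-1)}{n-2}=:c_n$, so that for any $u>0$ the conformal metric $u^{4/(n-2)}ds^2$ has scalar curvature $u^{-(n+2)/(n-2)}\,\L_{ds^2}u$; this is why the conclusion is phrased through $\L_{ds^2}$ and why a two-sided bound $0<C_1\leq u\leq C_2$ is wanted (so the conformal change is bi-Lipschitz, in particular complete). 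Testing the Rayleigh quotient in (\ref{fev}) against the constant function $1$ gives $\int_{B(p_0;r)}\Sca_{ds^2}>0$, so $\Sca_{ds^2}\not\equiv 0$ and we may fix $h\in C^\infty_c(M)$ with $\operatorname{supp}h\subset B(p_0;r)$, $h\not\equiv 0$, and $0\leq h\leq\Sca_{ds^2}$. Exhaust $M=\bigcup_j\Omega_j$ by relatively compact open sets with smooth boundary, $\overline{\Omega_j}\subset\Omega_{j+1}$, $\overline{B(p_0;r)}\subset\Omega_1$, and on each $\Omega_j$ solve the Dirichlet problem $\L_{ds^2}u_j=h$ in $\Omega_j$, $u_j=1$ on $\partial\Omega_j$; this is uniquely solvable, since writing $u_j=1+w_j$ with $w_j\in H^1_0(\Omega_j)$ the relevant bilinear form is coercive on $H^1_0(\Omega_j)$ because $\Sca_{ds^2}\geq 0$ (Poincar\'e), whence Lax-Milgram and elliptic regularity apply.

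Then I would establish the a priori bounds. Since $\L_{ds^2}u_j=h\geq 0$, $u_j=1>0$ on $\partial\Omega_j$, and the zeroth-order coefficient $\Sca_{ds^2}$ is nonnegative, the maximum principle gives $u_j>0$ in $\Omega_j$ (otherwise $u_j\equiv 0$, contradicting $h\not\equiv 0$), while $1-u_j$ solves $\L_{ds^2}(1-u_j)=\Sca_{ds^2}-h\geq 0$ — here $h\leq\Sca_{ds^2}$ is used — with zero boundary values, so $u_j\leq 1$. Comparison on $\Omega_j$, where $\L_{ds^2}(u_j-u_{j+1})=0$ and $u_j-u_{j+1}=1-u_{j+1}\geq 0$ on $\partial\Omega_j$, shows $u_{j+1}\leq u_j$, so the $u_j$ decrease pointwise; with $0<u_j\leq 1$ and interior Schauder estimates they converge in $C^\infty_{loc}(M)$ to $u$ with $0\leq u\leq 1$ and $\L_{ds^2}u=h$, and the strong maximum principle (applied to the limit, using $h\not\equiv 0$) upgrades this to $u>0$ on all of $M$.

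Two points remain, and they are where I expect the real work to be. The first is the uniform \emph{positive} lower bound $u\geq C_1>0$: since $M$ is noncompact, positivity of the continuous function $u$ does not by itself prevent $u\to 0$ at infinity. Outside $\operatorname{supp}h$ one has $\L_{ds^2}u=0$, i.e. $\Delta_{ds^2}u=c_n^{-1}\Sca_{ds^2}\,u\geq 0$, so $u$ is subharmonic there; combined with a Harnack inequality — and, in the cases relevant to this paper, with the fact that the complete metrics of Section \ref{universal} have bounded geometry and finite volume, hence are parabolic, so that a bounded subharmonic function cannot approach its infimum at infinity — this yields the bound. The second is to promote $\L_{ds^2}u=h\geq 0$ to $\L_{ds^2}u>0$ \emph{everywhere}: no compactly supported $h$ can be positive on all of a noncompact $M$, so one must iterate the construction, at the $k$-th stage deforming $ds^2$ by the factor just produced and placing the next bump near a point where the scalar curvature still vanishes, along an exhaustion of $M$, and then control the (individually bounded) infinite product of conformal factors so that the limit stays pinched between positive constants. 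Carrying out these two accumulation arguments is the delicate part; for both I would follow Kazdan's execution in \cite{Kazdan82}.
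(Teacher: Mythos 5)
The paper does not prove this statement: it is imported verbatim as Theorem A of \cite{Kazdan82}, so there is no in-paper argument to measure yours against. On its own terms, the routine half of your proposal is fine (testing (\ref{fev}) with the constant function to get $\int_{B}\Sca>0$, the exhaustion by Dirichlet problems $\L u_j=h$ with $u_j=1$ on $\partial\Omega_j$, coercivity from $\Sca\geq 0$, the maximum-principle bounds $0<u_j\leq 1$, monotonicity, and convergence to $u>0$ with $\L u=h$). But the two points you yourself flag as ``the real work'' are the entire content of Kazdan's theorem, and the mechanisms you sketch for them both fail.

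For the upgrade from $\L u=h\geq 0$ to $\L u>0$: with $h$ compactly supported, formula (\ref{4-3}) shows the conformal metric $u^{2/(3g-4)}ds^2$ has scalar curvature $u^{-(3g-2)/(3g-4)}h$, which vanishes identically outside $\mathrm{supp}\,h$ --- including at points where the original $\Sca$ was positive --- so your first step loses ground away from $B$ rather than gaining it. Worse, the proposed iteration cannot even be started at the next stage: at a point $q$ where the current scalar curvature vanishes on a neighborhood, the constant test function makes the Rayleigh quotient (\ref{fev}) of any small ball $B(q;\rho)$ equal to zero, so $\mu_1(B(q;\rho))=0$ and the hypothesis of the theorem is unavailable there. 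In the application the positivity of $\mu_1$ is produced (via Theorem \ref{norf} and the Ricci perturbation in Theorem \ref{deform}) at one ball only; the theorem must therefore manufacture global strict positivity in one shot, which forces one to solve $\L u=F$ with $F>0$ everywhere and suitably decaying and to prove the solution stays pinched --- that is the heart of Kazdan's proof, not a step one can defer.

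For the uniform lower bound $u\geq C_1$: your mechanism is inverted. Subharmonicity ($\alpha\Delta u=\Sca\cdot u\geq 0$ off $\mathrm{supp}\,h$) controls $u$ from \emph{above} by the maximum principle and says nothing about $u$ staying away from $0$; chaining Harnack only gives bounds decaying exponentially in the distance, and on an end with $\Sca>0$ the bounded solution of $\L u=0$ genuinely decays to zero (e.g.\ $\Delta w=w$ on $\mathbb{R}^2\setminus B_1$ has $w=K_0(r)/K_0(1)\to 0$), so the resulting conformal factor can degenerate at infinity. Moreover the properties you invoke --- finite volume, bounded geometry, parabolicity --- belong to the special metrics of Section \ref{universal}, whereas Theorem \ref{Kazdan-4-1} is applied to the arbitrary complete metric $ds_{t_0}^2$ with $\Sca\geq 0$ in the proof of Theorem \ref{deform}, and the paper stresses that no finite-volume hypothesis is imposed in Theorem \ref{mt-1}. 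Both gaps are genuine; closing them requires Kazdan's actual construction rather than the exhaustion scheme described.
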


Assume that $u>0$ on $(M,ds^2)$. We define the conformal metric
\begin{equation}\label{4-1}
ds_u^2:=u^{\frac{2}{3g-4}} \cdot ds^2
\end{equation}

Direct computation shows that the scalar curvature $\Sca_{ds_u^2}$ of $ds_u^2$ is given by the formula
\begin{eqnarray}\label{4-2}
\L_{ds^2}u &=&-\frac{2(6g-7)}{3g-4}\Delta_{ds^2} u+\Sca_{ds^2} \cdot u \\
&=& \Sca_{ds_u^2} \cdot u^{\frac{3g-2}{3g-4}}
\end{eqnarray}

Thus,
\begin{equation}\label{4-3}
\Sca_{ds_u^2}=\L_{ds^2}u \cdot u^{-\frac{3g-2}{3g-4}}
\end{equation}

\begin{proof}[Proof of Theorem \ref{deform}]
We follow exactly the same argument as in \cite{Kazdan82}. For the sake of completeness we sketch the proof.

First from Lemma \ref{norf} we know that that there exists a point $p_0\in M$ such that the Ricci tensor 
\begin{equation}\label{4-3-1}
\Ric_{(M,ds^2)}(p_0)\neq 0.
\end{equation} 

We let $r_1$ be a constant with $0<r_1<\inj(p_0)$. Pick a function $\eta \in C_0^{\infty}(B(p_0,r_1);\mathbb{R}^{\geq 0})$ with $\eta (p_0)>0$ and consider a family of metrics
$$ds_t^2:=ds^2-t\cdot \eta \cdot \Ric_{(M,ds^2)}$$
with scalar curvature $\Sca_{(M,ds_t^2)}$ and the corresponding operator $\L_{ds_t^2}$ defined in equation (\ref{opera}) with lowest Neumann eigenvalue $\mu_1(B(p_0,r_1),t)$. The first variation formula (see \cite{Kazdan82}) gives that

\begin{equation}\label{4-4}
\frac{d}{dt}\mu_{1}(B(p_0,r_1),t)|_{t=0}=\frac{\eta <\Ric, \Ric>}{\Vol((B(p_0,r_1))}
\end{equation}  
where $<.>$ is the standard inner product for tensors in the $ds^2$ metric.

Since $\eta(p_0)>0$, equations (\ref{4-3-1}) and (\ref{4-4}) give that 
\begin{equation}\label{4-4-1}
\frac{d}{dt}\mu_{1}(B(p_0,r_1),t)|_{t=0}>0.
\end{equation}  

Since $\Sca_{(M,ds^2)}\geq 0$, equation (\ref{fev}) gives that
$$\mu_1(B(p_0,r_1),0)=\mu_1(B(p_0,r_1)) \geq 0.$$

Thus, from inequality (\ref{4-4-1}) we know that for small enough $t_0>0$, 
\begin{equation}\label{4-5}
\mu_{1}(B(p_0,r_1),t_0)>0
\end{equation} 

It is clear that 
\begin{equation}\label{4-6}
ds_{t_0}^2 \asymp ds^2.
\end{equation}

Because of inequality (\ref{4-5}) we apply Theorem \ref{Kazdan-4-1} to $(M,ds_{t_0}^2)$. Thus, there is a smooth function $u$ on  $(M,ds_{t_0}^2)$ such that
\begin{equation}\label{4-7}
\L_{ds_{t_0}^2} u (p)>0 \quad \textit{and} \quad  u(p)>0,  \quad \forall p \in (M,ds_{t_0}^2).
\end{equation}
And there exist two constants $C_1, C_2>0$ such that 
\begin{equation}\label{4-8}
0<C_1\leq u(p) \leq C_2, \quad \forall p \in (M,ds_{t_0}^2).
\end{equation}

Then we define the new metric as 
\begin{equation}\label{4-9}
ds_1^2:=  u^{\frac{2}{3g-4}} \cdot ds_{t_0}^2.
\end{equation}

It is clear that Part (1) follows from equations (\ref{4-3}) and (\ref{4-7}). And Part (2) follows from equations (\ref{4-6}), (\ref{4-9}) and inequality (\ref{4-8}).
\end{proof}

\section{Proof of Theorem \ref{mt-1}}\label{proof of 1}
Before we prove Theorem \ref{mt-1}, let us make some preparation and fix the notations. 

Let $(M_1,ds_1^2), (M_2,ds_2^2)$ be two Finsler manifolds of the same dimensions, and $f:(M_1,ds_1^2) \to (M_2,ds_2^2)$ be a smooth map. For $C>0$, we call that  $f$ is an $C-contraction$ if for any $p \in M_1$, 
\begin{equation}
||f_{*}(V)||_{ds_2^2}\leq C \cdot ||V||_{ds_1^2}, \quad \forall V \in T_{p}M_1.
\end{equation}

Recall that the $degree$ $\deg(f)$ of $f$ is defined as

\begin{equation}\label{degree}
\deg(f)=\sum_{q \in f^{-1}(p)} \sign(\det f_{*}(q))
\end{equation}
where $p$ is a regular value of $f$. 

\begin{definition}
We call an $n$-dimensional Riemannian manifold $X$ is \textsl{hyperspherical} if for every $\epsilon>0$, there exists an $\epsilon$-contraction map $f_{\epsilon}:X \rightarrow \mathbb{S}^n$ of nonzero degree onto the standard unit $n$-sphere such that $f_{\epsilon}$ is a constant outside a compact subset in $X$.
\end{definition}

The following result was proved by Gromov and Lawson in \cite{GL83}.
\begin{theorem}[Gromov-Lawson]\label{gromovlawson}
A complete aspherical Riemannian manifold $X$ cannot have positive scalar curvature if the universal cover $\tilde{X}$ of $X$ is hyperspherical.
\end{theorem}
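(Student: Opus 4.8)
The plan is to reduce the statement to the classical non-existence of positive scalar curvature metrics on complete hyperspherical manifolds, for which the index-theoretic machinery of Gromov-Lawson \cite{GL83} applies directly. First I would pass to the universal cover: suppose, for contradiction, that $(X,g)$ is a complete aspherical Riemannian manifold with $\Sca_g > 0$ (or, more precisely, with scalar curvature bounded below by a positive constant, which is what one needs for the enlargeability argument), and lift $g$ to the universal cover $\tilde X$. Since the covering projection $\tilde X \to X$ is a local isometry, $(\tilde X,\tilde g)$ is again complete with scalar curvature bounded below by the same positive constant. By hypothesis $\tilde X$ is hyperspherical, so for every $\eps>0$ there is an $\eps$-contraction $f_\eps \colon \tilde X \to \mathbb S^n$ of nonzero degree which is constant outside a compact set.

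The second step is the core analytic/index-theoretic input. One builds, via the $\eps$-contractions $f_\eps$, a sequence of Hermitian bundles with connection on $\tilde X$ (pulling back a fixed bundle on $\mathbb S^n$ that is nontrivial in $K$-theory, e.g. a Bott generator or, in the odd-dimensional case, an auxiliary bundle on $\mathbb S^n\times\mathbb S^1$), whose curvature can be made as small as desired in operator norm by choosing $\eps$ small. Twisting the spin (or spin$^c$) Dirac operator of $(\tilde X,\tilde g)$ by such a bundle, the Lichnerowicz-Bochner-Weitzenböck formula gives $D^2 = \nabla^*\nabla + \tfrac14\Sca_{\tilde g} + \mathcal R^E$, where the bundle curvature term $\mathcal R^E$ is uniformly small while $\tfrac14\Sca_{\tilde g}$ is uniformly bounded below by a positive constant; hence the twisted Dirac operator is uniformly invertible and its (relative / coarse) index vanishes. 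On the other hand, a cohomological index computation shows this index is, up to normalization, the degree of $f_\eps$, which is nonzero — a contradiction. This is exactly the argument carried out in \cite{GL83} for enlargeable (in particular hyperspherical) complete manifolds; when $n$ is odd one first crosses with $\mathbb S^1$ to reduce to the even-dimensional case, and when $\tilde X$ is not spin one uses the asphericity of $X$ only insofar as it guarantees $\tilde X$ is contractible, hence spin, so the Dirac operator exists.

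The role of the asphericity hypothesis is precisely to ensure that $\tilde X$ is contractible and therefore spin, so that the Dirac operator and the above index argument are available; it also guarantees $\tilde X$ is noncompact (for $n\ge 1$), which is the setting in which the coarse/relative index is the natural invariant. The main obstacle is the second step: making rigorous the ``small curvature forces vanishing index'' assertion on a \emph{noncompact} complete manifold, which requires either the relative index theorem of \cite{GL83} (comparing $f_\eps$ with a constant map and using that $f_\eps$ is locally constant near infinity) or Roe's coarse index theory as in \cite{Roe93}. Once that analytic input is granted — and it is the content of the cited theorem — the remainder is the elementary reduction described above: lift the metric, invoke hypersphericity of $\tilde X$, produce the bundles, and extract the contradiction between vanishing index and nonzero degree.
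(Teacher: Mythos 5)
Your proposal is correct and is essentially the paper's own proof: the paper does not argue this theorem at all but simply cites Theorem 6.12 of \cite{GL83}, and your sketch (lift the metric to the contractible, hence spin, universal cover; pull back a $K$-theoretically nontrivial bundle on $\mathbb{S}^n$ via the $\eps$-contractions to get almost-flat bundles trivialized near infinity; compare the twisted and untwisted Dirac operators via the Lichnerowicz formula and the relative/coarse index theorem to contradict $\deg f_\eps\neq 0$) is exactly the argument being cited. The only caution is your parenthetical retreat to \emph{uniformly} positive scalar curvature: the theorem as stated (and as needed in Section \ref{proof of 1}, where Kazdan's deformation yields only pointwise positivity) concerns scalar curvature that is merely positive everywhere, and it is precisely the relative index theorem for operators agreeing outside a compact set --- which you already invoke --- that removes the uniformity hypothesis in \cite{GL83}, so that hedge should be dropped rather than built into the hypothesis.
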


\begin{proof}
See the proof of Theorem 6.12 in \cite{GL83}.
\end{proof}

The classical Cartan-Hardamard theorem implies that any complete simply connected Riemannian manifold of nonpositive sectional curvature is hyperspherical. A direct corollary of Theorem \ref{gromovlawson} is 
\begin{theorem}[Gromov-Lawson]\label{gl-3}
Let $(X,ds_1^2)$ be a complete Riemannian manifold of nonpositive sectional curvature. Then for any Riemannian metric $ds_2^2$ on $X$ with $ds_2^2 \succ ds_1^2$, the scalar curvature of $(X,ds_2^2)$ cannot be positive everywhere on $X$.
\end{theorem}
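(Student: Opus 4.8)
The plan is to verify that $(X,ds_2^2)$ meets the hypotheses of Theorem \ref{gromovlawson} and then to quote that theorem. Two of the ingredients are immediate. Since $(X,ds_1^2)$ is complete with nonpositive sectional curvature, the Cartan--Hadamard theorem shows that for any lift $p$ the exponential map $\exp_p$ identifies the universal cover $\tilde X$ with $T_p\tilde X\cong\R^n$, $n=\dim X$; hence $\tilde X$ is contractible, $X$ is a $K(\pi_1 X,1)$, and $X$ is aspherical (a purely topological conclusion, independent of the metric). Also, writing the hypothesis $ds_2^2\succ ds_1^2$ as $\|v\|_{ds_2^2}\geq k\,\|v\|_{ds_1^2}$ for a fixed $k>0$, any $ds_2^2$-Cauchy sequence is $ds_1^2$-Cauchy, hence convergent, so $(X,ds_2^2)$ is complete. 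It therefore remains to prove that $(\tilde X,\widetilde{ds_2^2})$ is hyperspherical.

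For the hyperspherical step I would fix $p\in\tilde X$, identify $T_p\tilde X$ isometrically with $\R^n$, and consider $\Phi:=\exp_p^{-1}\colon(\tilde X,\widetilde{ds_1^2})\to\R^n$. By the Rauch comparison theorem in nonpositive curvature, $\exp_p$ is an \emph{expanding diffeomorphism}, so $\Phi$ is a diffeomorphism whose differential is everywhere norm-nonincreasing, and $|\Phi(x)|=\dist_{\widetilde{ds_1^2}}(p,x)$. Fix once and for all a smooth map $g\colon\R^n\to\mathbb{S}^n$ of degree one onto the standard unit sphere that is constant outside the Euclidean unit ball, and let $L_0=L_0(n)$ be a Lipschitz constant for $g$. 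Given $\delta>0$, put $\eps=k\delta/L_0$ and define $f_\delta:=g\circ h_\eps\circ\Phi$, where $h_\eps\colon\R^n\to\R^n$ is the dilation $x\mapsto\eps x$. Then $f_\delta$ is onto and has degree $\pm1\neq0$ (a composition of two diffeomorphisms with a degree-one map); it is constant on $\{x:|\Phi(x)|\geq 1/\eps\}=\{x:\dist_{\widetilde{ds_1^2}}(p,x)\geq 1/\eps\}$, the complement of a compact ball of $(\tilde X,\widetilde{ds_1^2})$; and for $v\in T_x\tilde X$ the chain rule together with $\|v\|_{\widetilde{ds_1^2}}\leq k^{-1}\|v\|_{\widetilde{ds_2^2}}$ gives
$$\|df_\delta(v)\|_{\mathbb{S}^n}\leq L_0\,\eps\,\|d\Phi(v)\|\leq L_0\,\eps\,\|v\|_{\widetilde{ds_1^2}}\leq \frac{L_0\,\eps}{k}\,\|v\|_{\widetilde{ds_2^2}}=\delta\,\|v\|_{\widetilde{ds_2^2}}.$$
Thus $f_\delta$ is a $\delta$-contraction, so $(\tilde X,\widetilde{ds_2^2})$ is hyperspherical; Theorem \ref{gromovlawson} then shows that $(X,ds_2^2)$ cannot have positive scalar curvature on all of $X$.

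I do not anticipate a real obstacle. The single geometric input is the Cartan--Hadamard/Rauch fact that in nonpositive curvature $\exp_p$ is an expanding diffeomorphism, and that one fact delivers, at once, the asphericity of $X$, the diffeomorphism $\Phi$ (hence the nonvanishing of $\deg f_\delta$), the identity $|\Phi(x)|=\dist_{\widetilde{ds_1^2}}(p,x)$ (hence the compact-support property), and the norm bound $\|d\Phi\|\leq 1$ (hence that the contraction constant scales linearly in $\eps$). The mildly delicate point is simply the completeness of $(X,ds_2^2)$, which is needed to apply Theorem \ref{gromovlawson}; this can alternatively be sidestepped by citing the version of the Gromov--Lawson argument phrased directly for enlargeable manifolds, of which ``complete, with hyperspherical universal cover'' is a special case. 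Producing the standard collapsing map $g$ with a dimension-dependent Lipschitz constant is routine.
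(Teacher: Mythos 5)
Your proposal is correct and follows essentially the same route as the paper's outline: reduce to Theorem \ref{gromovlawson} by showing $(\tilde X, ds_2^2)$ is hyperspherical, using that the identity map $(\tilde X,ds_2^2)\to(\tilde X,ds_1^2)$ is a contraction up to the constant $k$ and that $\exp_p^{-1}$ is a $1$-contraction diffeomorphism onto $\R^n$ by Cartan--Hadamard, then composing with an $\eps$-contracting degree-one collapse $\R^n\to\mathbb{S}^n$. The only difference is that you spell out the construction of the collapsing map (as $g\circ h_\eps$) and the asphericity and completeness checks, which the paper leaves implicit.
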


\begin{proof}
One may see \cite{GL83} or \cite{Roe93} for the details.
\end{proof}

Now let us state the theorem (Theorem \ref{mt-1}) we will prove in this section.
\begin{theorem}\label{mt-1-1}
Let $S_{g}$ be a closed surface of genus $g$ with $g\geq 2$ and $M$ be a finite cover of the moduli space $\mathbb{M}_{g}$ of $S_{g}$. Then for any Riemannian metric $ds^2$ on $M$ with $ds^2 \succ ds_T^2$, we have
$$\inf_{p\in (M,ds^2)}\Sca(p)<0.$$
\end{theorem}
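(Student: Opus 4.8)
The plan is to reduce the statement to an application of the Gromov--Lawson criterion (Theorem~\ref{gromovlawson}), exactly as in the outline proof of Theorem~\ref{gl-3}, but replacing the flat comparison geometry $\mathbb{R}^n$ by an \emph{arbitrary} auxiliary metric $ds_a^2$ on the Teichm\"uller space whose universal cover we can see to be hyperspherical by large-scale means. Concretely, suppose for contradiction that $\inf_{p\in M}\Sca_{ds^2}(p)\geq 0$. By Theorem~\ref{deform} (after passing to a torsion-free finite cover $\overline M$, which is harmless since a finite cover of a manifold admitting a positive-scalar-curvature metric again admits one, and since $ds^2\succ ds_T^2$ lifts), we may assume $ds^2$ has scalar curvature \emph{strictly} positive everywhere, and still $ds^2\succ ds_T^2$ since the deformation of Theorem~\ref{deform} produces an equivalent metric. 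The space $M$ is aspherical: its universal cover is $\Teich(S_g)$, which is contractible (indeed a cell, or by Proposition~\ref{contract} it is an increasing union of contractible balls). So by Theorem~\ref{gromovlawson} it suffices to prove that $(\Teich(S_g),ds^2)$ is hyperspherical.

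First I would fix one of the K\"ahler metrics, say the McMullen metric $ds_M^2$, which by Theorem~\ref{McM00} satisfies $ds_M^2\asymp ds_T^2$ and has bounded geometry. Since $ds^2\succ ds_T^2\asymp ds_M^2$, the identity map $(\Teich(S_g),ds^2)\to(\Teich(S_g),ds_M^2)$ is a $k$-contraction for some $k>0$. The key remaining point is that $(\Teich(S_g),ds_M^2)$ is hyperspherical. For this I would invoke the finiteness of asymptotic dimension, $\asydim(\Teich(S_g),ds_M^2)<\infty$ (Theorem~\ref{finite}), together with the bounded geometry and contractibility. The relevant known fact (a theorem of Gromov, worked out e.g.\ by Dranishnikov / Yu in the context of the Novikov and Gromov--Lawson conjectures) is that a contractible Riemannian manifold of bounded geometry and finite asymptotic dimension is hyperspherical --- more precisely, for every $\varepsilon>0$ one can build an $\varepsilon$-contracting map to a sphere of the same dimension which is constant outside a compact set and has nonzero degree, using the finite-dimensional nerve of a uniformly bounded cover of multiplicity $\leq n+1$ to construct a proper, uniformly expanding ``coarse identity'' to a simplicial complex and then collapsing. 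Composing $F_\varepsilon\colon (\Teich(S_g),ds_M^2)\to \mathbb{S}^{6g-6}$ with the $k$-contraction above gives a $k\varepsilon$-contraction of nonzero degree, constant at infinity, hence $(\Teich(S_g),ds^2)$ is hyperspherical; contradiction with $\Sca_{ds^2}>0$ and Theorem~\ref{gromovlawson}.

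I expect the main obstacle to be the precise statement and invocation of the ``finite asymptotic dimension $+$ bounded geometry $+$ contractible $\Rightarrow$ hyperspherical'' implication: one has to be careful that the map to the sphere is genuinely $\varepsilon$-Lipschitz (not merely coarsely Lipschitz), has nonzero degree (this uses that the target sphere has the dimension of the manifold and a Lefschetz/degree argument at one regular value), and is locally constant near infinity --- and that bounded geometry of $ds_M^2$ is what converts the coarse-geometric input into an honest smooth contraction. The rest of the argument --- the contradiction hypothesis, the reduction to strictly positive scalar curvature via Theorem~\ref{deform}, the passage to a finite manifold cover, and the comparison $ds^2\succ ds_T^2\asymp ds_M^2$ --- is routine given the results already assembled in Sections~\ref{universal}--\ref{dtpsc}. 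One cosmetic point to handle: Theorem~\ref{mt-1} is phrased over the possibly-orbifold $M$, so I would first deduce the statement for a finite manifold cover $\overline M$ and then note that a positive lower bound for $\inf\Sca$ on $\overline M$ would force the same on $M$ (scalar curvature is a local invariant pulled back under the covering), giving the contradiction on $M$ itself.
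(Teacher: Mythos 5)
Your overall architecture (contradiction, pass to a torsion-free finite manifold cover, deform to strictly positive scalar curvature via Theorem~\ref{deform}, compare $ds^2\succ ds_T^2\asymp ds_M^2$, and invoke the Gromov--Lawson obstruction of Theorem~\ref{gromovlawson}) is exactly the paper's. But the step you single out as ``the key remaining point'' --- that $(\Teich(S_g),ds_M^2)$ is hyperspherical --- is a genuine gap: the implication you invoke, ``uniformly contractible $+$ bounded geometry $+$ finite asymptotic dimension $\Rightarrow$ hyperspherical,'' is \emph{not} a known theorem in this form. What Dranishnikov actually proves (Theorem~\ref{shyper} in the paper, Theorem~B of \cite{Drani03}) is only the \emph{stable} statement: under those hypotheses the product $X\times\mathbb{R}^n$ is hyperspherical for \emph{some} $n$. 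Whether $\Teich(S_g)$ with the Teichm\"uller metric (equivalently, with any bi-Lipschitz Riemannian metric such as $ds_M^2$) is itself hyperspherical is precisely Question~\ref{ques}, which the paper explicitly records as open and deliberately ``steers clear of.'' So as written, your proof rests on an unproved (and possibly false) assertion, and your $\varepsilon$-contraction $F_\varepsilon\colon(\Teich(S_g),ds_M^2)\to\mathbb{S}^{6g-6}$ is not available.

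The repair is small but essential, and it is what the paper does: stabilize by $\mathbb{R}^n$ throughout. Take the $n$ furnished by Theorem~\ref{shyper} so that $(\Teich(S_g),ds_M^2)\times\mathbb{R}^n$ is hyperspherical with target $\mathbb{S}^{6g-6+n}$; cross each of your identity-map contractions with $\mathrm{Id}_{\mathbb{R}^n}$ to conclude that $(\Teich(S_g),ds_1^2)\times\mathbb{R}^n$ is hyperspherical; observe that its scalar curvature equals $\Sca_{ds_1^2}>0$ since the $\mathbb{R}^n$ factor is flat; and apply Theorem~\ref{gromovlawson} to the product manifold (which is its own universal cover and is contractible, hence aspherical) to reach the contradiction. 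Everything else in your write-up --- the reduction to a manifold cover, the use of Theorem~\ref{deform}, the chain of contractions, and the degree/properness bookkeeping --- matches the paper's proof and is fine.
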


Recall that in the proof of Theorem \ref{gl-3} in \cite{GL83}, the nonpositivity of the sectional curvature is crucial because in this case the inverse of the exponential map is a contraction. In the setting of Theorem \ref{mt-1-1}, although the inverse of the exponential map is well-defined by Theorem \ref{teich}, it is far from a contraction. In fact, Masur in \cite{Masur75} showed that there exists two different geodesics in $(\Teich(S_g), ds_T^2)$ starting from the same point such that they have bounded Hausdorff distance. In particular, $(\Teich(S_g), ds_T^2)$ is not nonpositively curved in the sense of metric spaces. Hence, the argument in \cite{GL83} can not be directly applied to show Theorem \ref{mt-1-1}. Furthermore the following question is \textsl{unknown} as far as we know.

\begin{question}\label{ques}
Is $(\Teich(S_g), ds_T^2)$ hyperspherical? 
\end{question}

We are going to steer clear of Question \ref{ques} to prove Theorem \ref{mt-1-1}. It is very \textsl{interesting} to know the answer to Question \ref{ques}. We will discuss it from different viewpoints in the last section. \\

Before we prove Theorem \ref{mt-1-1}, we first provide two important properties for the Teichm\"uller space $(\Teich(S_g),ds^2)$ where $ds^2 \asymp ds_T^2$, which will be applied later.

\begin{definition}
Let $X$ be a metric space. We call $X$ is \textsl{uniformly contractible} if there is a function $f:(0,\infty) \to (0,\infty)$ so that for each $x\in X$ and $r>0$, the ball $B(x;r)$ of radius $r$ centered at $x$ is contractible in the concentric ball $B(x;f(r))$ of radius $f(r)$.
\end{definition}

\begin{proposition}\label{ucft}
Let $ds^2$ be a Riemannian metric on $\Teich(S_g)$ such that $ds^2 \asymp ds_T^2$. Then $(\Teich(S_g), ds^2)$ is uniformly contractible. 

In particular, the \tec space endowed with the perturbed Ricci metric $(\Teich(S_g),d_{LSY}^2)$ is uniformly contractible.
\end{proposition}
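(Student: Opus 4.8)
The plan is to transport the explicit contraction of Teichm\"uller geodesic balls furnished by Proposition \ref{contract} across the bi-Lipschitz comparison $ds^2 \asymp ds_T^2$. First I would record the metric-space consequence of $ds^2 \asymp ds_T^2$: there is a constant $L \geq 1$ with $L^{-1}\|v\|_{ds_T^2} \leq \|v\|_{ds^2} \leq L\|v\|_{ds_T^2}$ for every tangent vector $v$ on $\Teich(S_g)$; integrating along paths and taking the infimum over paths joining two points gives $L^{-1}\dist_T(p,q) \leq \dist_{ds^2}(p,q) \leq L\,\dist_T(p,q)$ for all $p,q \in \Teich(S_g)$. Denoting by $B_{ds^2}(x;s)$ and $B_T(x;s)$ the respective closed balls of radius $s$ about $x$, this yields, for every $x \in \Teich(S_g)$ and $r>0$, the nested inclusions $B_{ds^2}(x;r) \subseteq B_T(x;Lr) \subseteq B_{ds^2}(x;L^2 r)$.

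Next I would reuse, essentially verbatim, the homotopy from the proof of Proposition \ref{contract}, but applied to the Teichm\"uller ball $B_T(x;Lr)$: for $z \in B_T(x;Lr)$ let $c_z$ be the unique $ds_T^2$-geodesic from $x$ to $z$ (Theorem \ref{teich}), and set $H(z,t) = c_z\big(t\cdot \dist_T(x,z)\big)$. Theorem \ref{teich} makes $H$ well-defined and continuous, with $H(z,0)=x$ and $H(z,1)=z$; the point to stress is that $H$ stays inside $B_T(x;Lr)$, since $\dist_T\big(x,H(z,t)\big)=t\,\dist_T(x,z)\leq \dist_T(x,z)\leq Lr$.

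Finally I would restrict $H$ to $B_{ds^2}(x;r)\times[0,1]\subseteq B_T(x;Lr)\times[0,1]$; since $ds^2$ and $ds_T^2$ induce the same topology, the restriction is continuous, and it contracts $B_{ds^2}(x;r)$ to the point $x$ while remaining inside $B_T(x;Lr)\subseteq B_{ds^2}(x;L^2 r)$. Hence $(\Teich(S_g),ds^2)$ is uniformly contractible with the linear gauge function $f(r)=L^2 r$, and the ``in particular'' clause follows because $ds_M^2$, $ds_\tau^2$ and $ds_{LSY}^2$ are bi-Lipschitz to $ds_T^2$ by Theorems \ref{McM00} and \ref{lsy-04}. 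There is no serious obstacle here; the one subtlety worth flagging is that the contracting homotopy must be built in the Teichm\"uller metric, where geodesics between two points are unique and the straight-line-to-center map is available, and only then reinterpreted through the bi-Lipschitz comparison, since $ds^2$ itself need not be uniquely geodesic.
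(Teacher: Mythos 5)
Your argument is correct and follows essentially the same route as the paper: translate the bi-Lipschitz comparison into the nested ball inclusions $B_{ds^2}(x;r)\subseteq B_{T}(x;Lr)\subseteq B_{ds^2}(x;L^2r)$ and invoke the contractibility of Teichm\"uller balls from Proposition \ref{contract} to get a linear gauge function. The paper writes the constants asymmetrically (obtaining $f(r)=\tfrac{k_2}{k_1}r$ rather than $L^2r$) and simply cites Proposition \ref{contract} instead of re-deriving the geodesic homotopy, but these are cosmetic differences.
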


\begin{proof}
From Theorem \ref{lsy-04} we know that $ds_{LSY}^2 \asymp ds_T^2$. It suffices to show $(\Teich(S_g),ds^2)$ is uniformly contractible provided that $ds^2 \asymp ds_T^2$.

Since $ds^2 \asymp ds_T^2$, there exist two constants $k_1, k_2>0$ such that
$$k_1\cdot  ds_T^2 \leq ds^2 \leq k_2 \cdot ds_T^2.$$

In particular, we have, for each $p \in \Teich(S_g)$ and $r>0$
\begin{equation}\label{3-1-1}
B_{ds^2}(p;r)\subset B_{ds_T^2}(p;\frac{r}{k_1})\subset B_{ds^2}(p;\frac{k_2}{k_1}\cdot r)
\end{equation}
where $B_{ds^2}(p;r):=\{q\in \Teich(S_g); \ \dist_{ds^2}(p,q)\leq r\}$ and$B_{ds_T^2}(p;r):=\{q\in \Teich(S_g); \ \dist_{ds_T^2}(p,q)\leq r\}$.

Proposition \ref{contract} tells that the Teichm\"uller ball $B_{ds_T^2}(p;r)$ is contractible for all $r>0$ and $p\in \Teich(S_g)$. Thus, equation (\ref{3-1-1}) tells that $B_{ds^2}(p;r)$ is contractible in $B_{ds^2}(p;\frac{k_2}{k_1}\cdot r)$. Therefore, the conclusion follows by choosing 
$$f(r)=\frac{k_2}{k_1}\cdot r.$$
\end{proof}

\begin{definition}\label{2}
Let $X$ be a metric space. We call that $X$ has \textsl{bounded geometry} in the sense of coarse geometry if for every $\epsilon>0$ and every $r>0$, there exists an integer $n(r,\epsilon)>0$ such that for each $x\in X$ every ball $B(x;r)$ contains at most $n(r,\epsilon)$ $\epsilon$-disjoint points. Where $\epsilon$-disjoint means that any two different points are at at least $\epsilon$ distance from each other.
\end{definition}

\begin{proposition}\label{bgft}
$(\Teich(S_g),d_{LSY}^2)$ have bounded geometry in the sense of coarse geometry.
\end{proposition}
\begin{proof}
Let $\epsilon_0>0$ be the constant which is the lower bound for the injectivity radius of $(\Teich(S_g), ds_{LSY}^2)$ in Theorem \ref{lsy-04}. For every $r>0$ and every $\epsilon>0$. Let $p\in \Teich(S_g)$ and $B(p;r):=\{q\in \Teich(S_g); \ \dist_{ds_{LSY}^2}(p,q)\leq r\}$ be the geodesic ball of radius $r$ centered at p.  Assume $K=\{x_i\}_{i=1}^{k}$ be an arbitrary $\epsilon$- disjoint points in $B_{ds_{LSY}^2}(p;r)$. That is 
\begin{equation}\label{3-2-0}
\dist_{ds_{LSY}^2}(x_i,x_j)\geq \epsilon, \quad \forall 1\leq i\neq j\leq k.
\end{equation}

Let $\epsilon_1=\min{\{\frac{\epsilon}{4}, \epsilon_0\}}$. First the triangle inequality tells that
\begin{equation}\label{3-2-1}
\cup_{i=1}^{k}B(x_i; \epsilon_1) \subset B(p,r+\epsilon_1)\subset  B(p,r+\epsilon_0).
\end{equation}

By our assumptions that $\epsilon_1 \leq \frac{\epsilon}{4}$, inequality (\ref{3-2-0}) gives that
\begin{equation}\label{3-2-2}
B(x_i; \epsilon_1) \cap B(x_j; \epsilon_1) =\emptyset,  \quad \forall 1\leq i\neq j\leq k .
\end{equation}

From equations (\ref{3-2-1}) and (\ref{3-2-2}) we have
\begin{equation}\label{3-2-3}
\sum_{i=1}^k \Vol(B(x_i; \epsilon_1))\leq \Vol(B(p,r+\epsilon_0)).  
\end{equation}

From Theorem \ref{lsy-04} there exists a lower bound for the sectional curvatures of $(\Teich(S_g), ds_{LSY}^2)$, by using the Gromov-Bishop volume comparison inequality, in particular we have that there exists a constant $C(r, \epsilon_0, g)>0$ depending on $r, \epsilon_0$ and the genus $g$ such that the volume
\begin{equation}\label{3-2-4}
\Vol( B(p,r+\epsilon_0))\leq C(r, \epsilon_0, g).
\end{equation}

On the other hand, from Theorem \ref{lsy-04} we know that the sectional curvatures of $(\Teich(S_g), ds_{LSY}^2)$ have a upper bound. Since 
$$\epsilon_1 \leq \epsilon_0=\inj(\Teich(S_g), ds_{LSY}^2),$$ 

Elementary Riemannian geometry tells that there exists a constant $D(\epsilon_1, g)>0$ depending on $\epsilon_1$ and the genus $g$ such that the volume
\begin{equation}\label{3-2-5}
\Vol( B(x_i,\epsilon_1))\geq D(\epsilon_1, g)>0, \quad \forall 1\leq i \leq k.
\end{equation}

Inequalities (\ref{3-2-3}), (\ref{3-2-4}) and (\ref{3-2-5}) give that 
\begin{equation}\label{3-2-6}
k\leq \frac{C(r, \epsilon_0, g)}{ D(\epsilon_1, g)}.
\end{equation}
Then the conclusion follows by choosing 

$$n(r,\epsilon)=\frac{C(r, \epsilon_0, g)}{ D(\epsilon_1, g)}.$$
\end{proof}

The following result of A. N. Dranishnikov in \cite{Drani03} will be applied to prove Theorem \ref{mt-1-1}. 
\begin{theorem}[Dranishnikov]\label{shyper}
Let $X$ be a complete uniformly contractible Riemannian manifold with bounded geometry whose asymptotic dimension is finite, then the product $X\times \mathbb{R}^n$, endowed with the product metric, is hyperspherical for some positive number $n \in \mathbb{Z}$.
\end{theorem}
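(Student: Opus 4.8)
The plan is to build, for each $\epsilon>0$, a proper $\epsilon$-contraction $F\colon X\times\mathbb{R}^n\to\mathbb{R}^{d+n}$ of nonzero degree, where $d=\dim X$ and $n$ is a fixed integer depending only on $d$ and $m:=\asydim X$; such a map is \emph{hyper-Euclidean}, and it yields hypersphericity by composing with the standard contracting degree-one collapse $\mathbb{R}^{d+n}\to\mathbb{S}^{d+n}$ that is the identity on a ball and constant (the south pole) outside a compact set. The three hypotheses then play three separate roles, which I would orchestrate as follows.

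First, finite asymptotic dimension supplies a \emph{coarse skeleton}. For each $R>0$, the condition $\asydim X\le m$ gives a uniformly bounded cover $\mathcal U_R$ of $X$ with Lebesgue number $\ge R$ and multiplicity $\le m+1$. Its nerve $N_R$ is then a simplicial complex of dimension $\le m$, and the associated partition-of-unity map $\phi_R\colon X\to |N_R|$ is Lipschitz; giving $|N_R|$ the standard path metric and rescaling the target by a factor comparable to $1/R$, the bounded-geometry hypothesis forces $\phi_R$ to be an $\epsilon$-contraction once $R$ is large. This already produces contractions of $X$ into a complex of fixed dimension $\le m$, but on dimensional grounds such a map cannot carry the fundamental class, so by itself it has no hope of nonzero degree.

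Second, uniform contractibility supplies a \emph{coarse inverse}, which is how the fundamental class is recovered. I would construct a proper map $\psi_R\colon |N_R|\to X$ by induction on skeleta: send each vertex to a point of the corresponding cover element, and extend over the $k$-cells one dimension at a time, using uniform contractibility at each stage to fill a cell of controlled diameter inside a ball whose radius is bounded by the contractibility function $f$ applied to the mesh. Because $\dim N_R\le m$ the induction terminates after $m$ steps, so $\dist_{ds^2}(\psi_R\circ\phi_R,\mathrm{id}_X)$ stays bounded independently of $R$, and likewise $\phi_R\circ\psi_R$ is within bounded distance of $\mathrm{id}_{N_R}$. Thus $\phi_R$ is a coarse equivalence onto a thickened $m$-complex, and $\psi_R$ transports the generator of locally finite homology correctly. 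The Euclidean factor $\mathbb{R}^n$ then repairs the dimension defect: piecewise-linearly embedding $N_R$ in some $\mathbb{R}^{2m+1}$ in general position turns $\phi_R$ into a contraction $X\to\mathbb{R}^{2m+1}$, and with $n$ chosen so that $d+n$ exceeds $2m+1$ by enough, the extra coordinates absorb the difference between $d$ and $m$ and provide the transverse directions along which $\psi_R$ spreads the fundamental class of $X$, allowing me to assemble the proper contraction $F\colon X\times\mathbb{R}^n\to\mathbb{R}^{d+n}$.

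The \textbf{main obstacle} is the degree computation: verifying that $F$ has nonzero degree rather than being killed by the dimension collapse in the nerve. Making maps contracting and constructing the coarse inverse are soft; the content is that the composite genuinely represents $[X]$. I would control this through coarse (co)homology: bounded geometry together with uniform contractibility give a natural isomorphism between the locally finite homology of $X$ and its coarse homology, under which the fundamental class is nonzero, while finite asymptotic dimension keeps the relevant coarse cohomology concentrated in degree $\le m$, so that the pairing computing $\deg F$ reduces to the intersection of $\phi_R$ with $\psi_R$. Since $\phi_R\circ\psi_R$ is within bounded distance of the identity on $N_R$, that pairing is forced to be $\pm1$. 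Arranging the geometry so that this pairing survives into the top degree $d+n$ — rather than vanishing because the nerve only sees dimension $m$ — is exactly what compels the passage to the product with $\mathbb{R}^n$, and is the crux of the argument.
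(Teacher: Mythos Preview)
The paper does not prove this theorem at all: Theorem~\ref{shyper} is quoted as a result of Dranishnikov from \cite{Drani03}, with only the remark that the stated form follows from Theorem~4 and Lemma~4 there (the bounded-geometry hypothesis being needed for Theorem~4). So there is no ``paper's own proof'' to compare against; the authors treat it as a black box and immediately apply it in the proof of Theorem~\ref{mt-1}.

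What you have written is a reasonable outline of how the argument in \cite{Drani03} actually goes --- anti-\v{C}ech approximation via nerve maps from the finite-asymptotic-dimension covers, a coarse inverse built skeleton-by-skeleton from uniform contractibility, and stabilization by a Euclidean factor to repair the dimension deficit so that the fundamental class survives --- and your identification of the degree computation as the crux is accurate. But this is a reconstruction of the cited external result, not of anything the present paper does. If your goal is to match the paper, the correct ``proof'' here is simply a citation; if your goal is to supply what the paper omits, be aware that turning your sketch into an honest argument requires the coarse-homology machinery (the isomorphism between locally finite homology and coarse homology for uniformly contractible spaces of bounded geometry, and the pairing that detects the degree), which you gesture at but do not set up.
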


We remark here that the statement of the theorem above is different from Theorem 5 (or Theorem B) in \cite{Drani03} where there is no condition on the bounded geometry. But if one checks the proof of Theorem 5 in \cite{Drani03}, Theorem 5 follows from Theorem 4 and Lemma 4 in \cite{Drani03} where Theorem 4 requires that the space $X$ has bounded geometry. We are grateful to Prof. Dranishnikov for the clarification.

Now we are ready to prove Theorem \ref{mt-1}.
\begin{proof}[Proof of Theorem \ref{mt-1}]
Let $M$ be a finite cover of the moduli space $\mathbb{M}_g$ of $S_g$ and $ds^2$ be a Riemannian metric on $M$ such that $ds^2 \succ ds_T^2$. That is, there exists a constant $k_1>0$ such that 
\begin{equation}\label{1-1-1}
ds^2 \geq k_1 \cdot d_T^2.
\end{equation}
We argue it by contradiction. Assume that 
\begin{equation}\label{1-1-2}
\inf_{p\in (M,ds^2)}\Sca(p)\geq 0.
\end{equation}

If necessary, we pass to a finite cover of $M$, still denoted by $M$, such that $M$ is a manifold. From inequality (\ref{1-1-1}) we know that $(M,ds^2)$ is complete since the Teichm\"uller metric is complete. Thus, from Theorem \ref{deform} we know that there exists a new metric $ds_1^2$ on $M$ such that
\begin{equation}\label{1-1-3}
\Sca(p)> 0, \ \forall p\in (M,ds_1^2)
\end{equation}

and 
\begin{equation}\label{1-1-4}
ds_1^2\asymp ds^2.
\end{equation}

Let $ds_{LSY}^2$ be the perturbed Ricci metric on $M$. In fact either the McMullen metric or the Ricci metric also works here. From Proposition \ref{ucft}, Proposition \ref{bgft} and Theorem \ref{finite} we know that the universal cover $(\Teich(S_g), ds_{LSY}^2)$ of $(M,ds_{LSY}^2)$ is uniformly contractible, has bounded geometry and
$$\asydim((\Teich(S_g),ds_{LSY}^2))<\infty.$$

Hence, one may apply Theorem \ref{shyper} to get a positive integer $n$ such that $(\Teich(S_g), ds_{LSY}^2)\times \mathbb{R}^n$, endowed with the product metric, is hyperspherical. 

We pick this integer $n\in \mathbb{Z}^+$ and consider the product space 
$$(\Teich(S_g),ds_1^2)\times \mathbb{R}^n$$ 
where $(\Teich(S_g),ds_1^2)$ is the universal cover of $(M,ds_1^2)$.

It is clear that $(\Teich(S_g),ds_1^2)\times \mathbb{R}^n$ is a complete $(6g-6+n)$-dimensional Riemannian manifold, and the scalar curvature of $ (\Teich(S_g),ds_1^2)\times \mathbb{R}^n$ satisfies that
\begin{equation}\label{1-1-5}
 \Sca((p,v))=\Sca(p)> 0
\end{equation}
where $(p,v)$ is arbitrary in $(\Teich(S_g),ds_1^2)\times \mathbb{R}^n$.\\

\textsl{Claim: The complete product manifold $(\Teich(S_g),ds_1^2)\times \mathbb{R}^n$ is hyperspherical.}\\

\begin{proof}[Proof of the Claim:]

First since $ds_1^2 \asymp ds^2$ (see equation (\ref{1-1-4})), the identity map
\begin{equation}\label{1-1-6}
i_1: (\Teich(S_g), ds_1^2) \times \mathbb{R}^n \to (\Teich(S_g), ds^2) \times \mathbb{R}^n .
\end{equation}
is a $c_1$-contraction diffeomorphism for some constant $c_1\geq 1$.

Since we assume that $ds^2 \succ ds_T^2$ (by assumption), the identity map
\begin{equation}\label{1-1-7}
i_2: (\Teich(S_g), ds^2) \times \mathbb{R}^n  \to (\Teich(S_g), ds_T^2) \times \mathbb{R}^n .
\end{equation}
is a $c_2$-contraction diffeomorphism for some constant $c_2\geq 1$.

From Theorem \ref{lsy-04} we know that $ds_T^2 \asymp ds_{LSY}^2$. Thus, the identity map
\begin{equation}\label{1-1-8}
i_3: (\Teich(S_g), ds_T^2)  \times \mathbb{R}^n \to (\Teich(S_g), ds_{LSY}^2) \times \mathbb{R}^n .
\end{equation}
is a $c_3$-contraction diffeomorphism for some constant $c_3\geq 1$.

By the choice of $n\in \mathbb{Z}^+$ we know that for every $\epsilon>0$ there exists an $\epsilon$-contraction map
\begin{equation}\label{1-1-9}
f_\epsilon: (\Teich(S_g), ds_{LSY}^2)  \times \mathbb{R}^n \to  \mathbb{S}^{6g-6+n}.
\end{equation}
such that $f_\epsilon$ is of nonzero degree onto the unit $(6g-6+n)$-sphere and $f_\epsilon$ is a constant outside a compact subset in $(\Teich(S_g), ds_{LSY}^2)  \times \mathbb{R}^n$.

Consider the following composition map
\begin{eqnarray}\label{1-1-10}
F_\epsilon: (\Teich(S_g), ds_1^2)  \times \mathbb{R}^n &\to&  \mathbb{S}^{6g-6+n} \\
(p,v)&\mapsto& f_\epsilon \circ i_3 \circ i_2 \circ i_1 (p,v) \nonumber
\end{eqnarray}
where  $(p,v)$ is arbitrary in $(\Teich(S_g), ds_1^2)  \times \mathbb{R}^n$.

Since $i_1, i_2$ and $ i_3$ are diffeomorphisms and $f_\epsilon$ has nonzero degree, $F_\epsilon$ also has nonzero degree by the definition.

Since $i_1, i_2$ and $ i_3$ are diffeomorphisms and $f_\epsilon$ is a constant outside a compact subset of $(\Teich(S_g), ds_{LSY}^2)  \times \mathbb{R}^n$, a standard argument in set-point topology gives that $F_\epsilon$ is also a constant outside a compact subset of $(\Teich(S_g), ds_{LSY}^2)  \times \mathbb{R}^n$.

It is clear that $F_\epsilon$ is onto because $i_1, i_2, i_3$ and $f_\epsilon$ are onto.

It remains to show that $F_\epsilon$ is a contraction. For every point $(p,v)\in (\Teich(S_g), ds_1^2)  \times \mathbb{R}^n$ and any tangent vector $W\in T_{(p,v)}((\Teich(S_g), ds_1^2)  \times \mathbb{R}^n)=\mathbb{R}^{6g-6+n}$,

\begin{eqnarray*}\label{1-1-11}
||(F_{\epsilon})_{*}(W)||&=&|| (f_\epsilon \circ i_3 \circ i_2 \circ i_1)_{*}(W)||\\
&\leq & \epsilon \cdot|| ( i_3 \circ i_2 \circ i_1)_{*}(W) ||\\
&\leq & \epsilon \cdot c_3 \cdot || ( i_2 \circ i_1)_{*}(W) ||\\
&\leq & \epsilon \cdot c_3 \cdot c_2 \cdot || ( i_1)_{*}(W) ||\\
&\leq &  \epsilon \cdot c_3 \cdot c_2 \cdot c_1 \cdot ||W||
\end{eqnarray*}
where $||\cdot||$ is the standard Euclidean norm in $\mathbb{R}^{6g-6+n}$.

Since $\epsilon>0$ is arbitrary and $c_1, c_2, c_3>0$, the claim follows.

\end{proof}

From the claim above and Theorem \ref{gromovlawson}  of Gromov-Lawson we know that the product manifold $(\Teich(S_g),ds_1^2)\times \mathbb{R}^n$ cannot have positive scalar curvature which contradicts inequality (\ref{1-1-5}).

\end{proof}

\begin{remark}
The following more general statement follows from exactly the same argument as the proof of Theorem \ref{mt-1}.
\begin{theorem}
Let $S_{g}$ be a closed surface of genus $g$ with $g\geq 2$ and $M$ be any cover which may be an infinite cover,  of the moduli space $\mathbb{M}_{g}$ of $S_{g}$ such that the orbiford fundamental group of $M$ contains a free subgroup of rank $\geq 2$. Then for any Riemannian metric $ds^2$ on $M$ with $ds^2 \succ ds_T^2$ we have
$$\inf_{p\in (M,ds^2)}\Sca(p)<0.$$
\end{theorem}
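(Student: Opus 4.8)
The plan is to rerun the proof of Theorem~\ref{mt-1} almost verbatim, replacing the two uses of the hypothesis ``$M$ is a \emph{finite} cover'' --- the passage to a torsion-free manifold cover, and the no-Ricci-flat input of Theorem~\ref{deform} --- by consequences of the hypothesis that the orbifold fundamental group $\Gamma\leq\Mod(S_g)$ of $M$ contains a free subgroup $F$ of rank $\geq 2$. First I would fix a torsion-free finite-index subgroup $\Gamma_0\leq\Mod(S_g)$ (these exist, see \cite{FM-mcg}) and pass from $M$ to the finite cover $M'$ corresponding to $\Gamma':=\Gamma\cap\Gamma_0$. Since $\Gamma'$ is torsion-free it acts freely and properly discontinuously on the contractible space $\Teich(S_g)$, so $M'=\Teich(S_g)/\Gamma'$ is a genuine manifold, aspherical, with universal cover $\Teich(S_g)$. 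Moreover $F\cap\Gamma_0$ has finite index in $F$, and a finite-index subgroup of a free group of rank $\geq 2$ is again free of rank $\geq 2$ (Nielsen--Schreier), so $\Gamma'$ still contains a free subgroup of rank $\geq 2$. As $\dim M'=6g-6\geq 3$ and the conclusion $\inf\Sca<0$ is unaffected by passing to a cover, it suffices to reach a contradiction from the assumption $\inf_{p\in(M',ds^2)}\Sca(p)\geq 0$; note that $(M',ds^2)$ is complete because $ds^2\succ ds_T^2$ and $ds_T^2$ is complete.

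Next I would reproduce the no-Ricci-flat step. The proof of Theorem~\ref{norf} uses only that the group acting on the universal cover $(\Teich(S_g),ds^2)$ contains a copy of $\mathbb{F}_2$: Gromov--Bishop volume comparison forces such an $\mathbb{F}_2$ to have polynomial growth if the complete lifted metric were Ricci flat, contradicting its exponential growth. Running this argument with $F\leq\Gamma'$ in place of $\langle\tau_\alpha^{n_0},\tau_\beta^{m_0}\rangle$ produces a point $p_0\in M'$ with $\Ric_{(M',ds^2)}(p_0)\neq 0$ --- precisely the content of the remark following Theorem~\ref{norf}. With this in hand, the proof of Theorem~\ref{deform} goes through unchanged, since Kazdan's Theorem~\ref{Kazdan-4-1} and the conformal-change identity \eqref{4-3} are local and valid on any complete Riemannian manifold of dimension $\geq 3$, with no hypothesis on volume. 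This yields a complete metric $ds_1^2\asymp ds^2$ on $M'$ with $\Sca_{(M',ds_1^2)}>0$ everywhere.

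The coarse-geometric core of the argument is insensitive to the choice of cover, since it concerns only the fixed space $\Teich(S_g)$: by Propositions~\ref{ucft} and \ref{bgft} and Theorem~\ref{finite}, $(\Teich(S_g),ds_M^2)$ is uniformly contractible, has bounded geometry, and has finite asymptotic dimension, so Dranishnikov's Theorem~\ref{shyper} supplies $n\in\mathbb{Z}^+$ for which $(\Teich(S_g),ds_M^2)\times\mathbb{R}^n$ is hyperspherical. Exactly as in the Claim inside the proof of Theorem~\ref{mt-1}, composing the contractions coming from $ds_1^2\asymp ds^2\succ ds_T^2\asymp ds_M^2$ with the $\epsilon$-contraction onto $\mathbb{S}^{6g-6+n}$ shows that $(\Teich(S_g),ds_1^2)\times\mathbb{R}^n$ is hyperspherical. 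But this space is the universal cover of the complete aspherical manifold $(M',ds_1^2)\times\mathbb{T}^n$ equipped with the product metric ($\mathbb{T}^n$ flat), whose scalar curvature at every point equals $\Sca_{(M',ds_1^2)}>0$; Theorem~\ref{gromovlawson} then gives the desired contradiction.

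The only genuinely new point, and the step I would be most careful about, is contained in the first two paragraphs: one must check that the auxiliary cover $M'$ can be chosen at once a manifold, aspherical, and with $\pi_1$ still containing a free group of rank $\geq 2$ (all from the Nielsen--Schreier fact on finite-index subgroups of free groups), and that Theorems~\ref{norf} and \ref{deform} do not secretly rely on finiteness of the cover or of the volume --- they do not, because the volume-growth argument needs only $\mathbb{F}_2\leq\pi_1$ and Kazdan's deformation is purely local. Everything else is a transcription of the proof of Theorem~\ref{mt-1}.
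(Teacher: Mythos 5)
Your proposal is correct and follows essentially the same route the paper intends: the paper's own justification for this statement is simply that it ``follows from exactly the same argument as the proof of Theorem~\ref{mt-1}'', and you have verified precisely the two points where finiteness of the cover was used --- passing to a torsion-free manifold cover via $\Gamma\cap\Gamma_0$ (with Nielsen--Schreier preserving the rank-$\geq 2$ free subgroup) and feeding that free subgroup into the volume-growth argument of Theorem~\ref{norf} so that Theorem~\ref{deform} still applies. Your explicit use of $(M',ds_1^2)\times\mathbb{T}^n$ as the complete aspherical manifold to which Theorem~\ref{gromovlawson} is applied is, if anything, slightly more careful than the paper's own phrasing.
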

\end{remark}

\section{Proof of Theorem \ref{mt-2}}\label{proof of 2}
We start with the following definition.
\begin{definition}
Let $M$ be a cover, which may be an infinite cover, of the moduli space $\mathbb{M}_g$ and $ds^2$ be a Riemannian metric on $M$. We call that $ds^2$ is \textsl{quasi-isometric} to the Teichm\"uller metric $ds_T^2$ if there exist two positive constants $L\geq 1$ and $K\geq 0$ such that on the universal cover $(\Teich(S_g), ds^2)$ ($(\Teich(S_g), ds_T^2)$) of  $(M, ds^2)$ ($(M, ds_T^2)$) respectively, the identity map satisfies
$$ L^{-1}\dist_{ds_T^2}(p,q)-K \leq \dist_{ds^2}(p,q)\leq L \,\dist_{ds_T^2}(p,q)+K, \quad \forall p, q \in \Teich(S_g).$$
\end{definition}

If $K=0$, $ds^2$ is equivalent to $ds_T^2$.

In the quasi-isometry setting, the identity map, defined in the equation (\ref{1-1-7}) in the proof of Theorem \ref{mt-1}, may not be a contraction. Therefore, the proof of Theorem \ref{mt-1} can not directly lead to Theorem \ref{mt-2}. Instead of applying Theorem \ref{gromovlawson} in the proof of Theorem \ref{mt-1}, we apply the following theorem of Yu in \cite{Yu98} to prove Theorem \ref{mt-2}. One can see Corollary 7.3 in \cite{Yu98}.

\begin{theorem}[Yu]\label{yu}
A uniformly contractible Riemannian manifold with finite asymptotic dimension cannot have uniform positive scalar curvature.
\end{theorem}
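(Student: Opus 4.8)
The plan is to prove this by coarse index theory: I will manufacture a coarse index class of the Dirac operator that is forced to vanish by the uniform positivity of the scalar curvature, yet is forced to be nonzero by uniform contractibility together with finiteness of the asymptotic dimension. I will carry out the argument for spin manifolds, which is the essential case and covers the intended application (there the relevant universal cover is diffeomorphic to $\R^{6g-6}$, hence spin). So fix a complete spin manifold $X$ of bounded geometry, let $C^*(X)$ be its Roe algebra, and recall the coarse assembly map
\[
\mu: KX_*(X)\longrightarrow K_*(C^*(X))
\]
from the coarse $K$-homology of $X$ to the $K$-theory of the Roe algebra. The Dirac operator $D$ has a well-defined coarse index $\mathrm{Ind}(D)\in K_*(C^*(X))$ satisfying $\mathrm{Ind}(D)=\mu([D])$, where $[D]\in KX_*(X)$ is the coarse fundamental class built from $D$. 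The whole proof reduces to showing that $\mathrm{Ind}(D)$ must be both zero and nonzero.

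For the vanishing I would invoke the Lichnerowicz formula
\[
D^2=\nabla^*\nabla+\tfrac14\Sca .
\]
Uniform positivity, say $\Sca\geq\epsilon_0>0$ everywhere, yields the operator inequality $D^2\geq\tfrac{\epsilon_0}{4}$, so that $0$ lies in a uniform gap in the spectrum of $D$. Choosing a normalizing function that is locally constant ($\pm1$) across this gap, functional calculus produces a genuine self-adjoint involution $\mathrm{sgn}(D)$ with no correction term in $C^*(X)$, which trivializes the index obstruction; hence $\mathrm{Ind}(D)=0$ in $K_*(C^*(X))$. This is the routine half.

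For the nonvanishing I would exploit uniform contractibility to identify coarse $K$-homology with ordinary locally finite $K$-homology. Because $X$ is uniformly contractible and of bounded geometry, the canonical coarsening map $K_*^{lf}(X)\to KX_*(X)$ is an isomorphism: uniform contractibility guarantees that the Rips complexes $P_d(X)$ are, compatibly in $d$, homotopy equivalent to $X$ itself. Under this identification the coarse fundamental class $[D]$ corresponds to the honest $K$-homology fundamental class of the $n$-manifold $X$, which is nonzero since $X$ is $K$-oriented (spin) and therefore satisfies Poincar\'e duality. It then remains to push $[D]$ through $\mu$ without killing it, and this is exactly where $\asydim(X)<\infty$ enters: Yu's theorem that finite asymptotic dimension (with bounded geometry) implies the coarse Baum-Connes conjecture, i.e.\ that $\mu$ is an isomorphism. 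Injectivity alone suffices, giving $\mathrm{Ind}(D)=\mu([D])\neq 0$ and contradicting the previous paragraph.

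\textbf{The main obstacle} is establishing that finite asymptotic dimension forces $\mu$ to be injective (the coarse Novikov conjecture); this is the deepest step and is Yu's central technical achievement in \cite{Yu98}. I would prove it by induction on the asymptotic dimension: a space with $\asydim\leq n$ admits, at every scale, a uniformly bounded cover splitting into $n+1$ coarsely disjoint families, and this geometric decomposition can be promoted to a Mayer--Vietoris sequence in the $K$-theory of the associated Roe-type algebras, matched against the corresponding sequence in coarse $K$-homology. Comparing the two sequences through $\mu$ and applying the five lemma lowers the asymptotic dimension by one at each stage, the base case being a uniformly bounded (``zero-dimensional'') space where $\mu$ is visibly an isomorphism. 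Setting up these Mayer--Vietoris sequences so that they are exact and natural \emph{uniformly across all scales}, and controlling the ideals of $C^*(X)$ that implement the decomposition, is the technical heart where essentially all of the difficulty resides.
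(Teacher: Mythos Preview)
The paper does not prove this theorem at all: it is quoted as a result of Yu and attributed to \cite{Yu98} (Corollary~7.3 there), with no argument given. So there is no ``paper's own proof'' to compare against; the authors use it as a black box to deduce Theorem~\ref{mt-2}.

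That said, your sketch is a faithful outline of how Yu's theorem is actually proved in \cite{Yu98}: the Lichnerowicz formula forces the coarse index of the Dirac operator to vanish under uniformly positive scalar curvature, while uniform contractibility identifies coarse $K$-homology with locally finite $K$-homology (so the Dirac class is nonzero), and finite asymptotic dimension gives injectivity of the coarse assembly map $\mu$ via Yu's Mayer--Vietoris/induction argument. Two small caveats worth flagging. First, you impose a spin hypothesis that is not in the statement; this is fine for the application in this paper (the universal cover is $\R^{6g-6}$), but Yu's argument in full generality is set up so as not to require spin. Second, you also slip in a bounded-geometry hypothesis; Yu's original formulation does carry an assumption of this kind, and indeed the paper verifies bounded geometry separately (Proposition~\ref{bgft}) before invoking the machinery, so this is consistent with how the result is actually used even though the statement as recorded here omits it.
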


Now we are ready to prove Theorem \ref{mt-2}.

\begin{proof}[Proof of Theorem \ref{mt-2}]
Let $M$ be a cover of the moduli space $\mathbb{M}_g$ of $S_g$ and $ds^2$ be a Riemannian metric on $M$ such that $ds^2$ is quasi-isometric to $ds_T^2$. That is, there exist two constants $L\geq1$ and $K>0$ such that 
\begin{equation}\label{2-1-1}
L^{-1}\dist_{ds_T^2}(p,q)-K \leq \dist_{ds^2}(p,q)\leq L \, \dist_{ds_T^2}(p,q)+K, \quad \forall p, q \in \Teich(S_g).
\end{equation}

Since the asymptotic dimension is a quasi-isometric invariance, Theorem \ref{asyofteich} gives that 
\begin{equation}
\asydim((\Teich(S_g), ds^2))<\infty.
\end{equation}

From Theorem \ref{yu} of Yu, it remains to show that $(\Teich(S_g), ds^2)$ is uniformly contractible. We follow a similar argument in the proof of Proposition \ref{ucft} to finish the proof.

For each $p \in \Teich(S_g)$ and every $r>0$, inequality (\ref{2-1-1}) and the triangle inequality lead to
\begin{equation}\label{6-1-1}
B_{ds^2}(p;r)\subset B_{ds_T^2}(p;L\cdot (r+K))\subset B_{ds^2}(p; L^2\cdot (r+K)+K)
\end{equation}
where $B_{ds^2}(p;r):=\{q\in \Teich(S_g); \ \dist_{ds^2}(p,q)\leq r\}$ and$B_{ds_T^2}(p;r):=\{q\in \Teich(S_g); \ \dist_{ds_T^2}(p,q)\leq r\}$.

Proposition \ref{contract} tells that the Teichm\"uller ball $B_{ds_T^2}(p;L\cdot(r+K))$ is contractible for all $r>0$ and $p\in \Teich(S_g)$. Thus, equation (\ref{6-1-1}) tells that $B_{ds^2}(p;r)$ is contractible in $B_{ds^2}(p; L^2\cdot (r+K)+K)$. Thus, the conclusion follows by choosing 
$$f(r)= L^2\cdot (r+K)+K.$$
\end{proof}

\begin{remark}
Theorem \ref{mt-2} also holds in the following sense of quasi-isometry, where we call that $ds^2$ is \textsl{quasi-isometric} to the Teichm\"uller metric $ds_T^2$ if there exist two positive constants $L\geq 1$, $K\geq 0$ and a map $$f: (\Teich(S_g), ds_T^2) \to (\Teich(S_g), ds^2)$$ such that for all $p, q \in \Teich(S_g)$,
$$ L^{-1}\dist_{ds_T^2}(p,q)-K \leq \dist_{ds^2}(f(p),f(q))\leq L \,\dist_{ds_T^2}(p,q)+K.$$
If we assume that $(\Teich(S_g), ds^2)$ is quasi-isometric to $(\Teich(S_g), ds_T^2)$, then the space $(\Teich(S_g), ds^2)$ is also quasi-isometric to $(\Teich(S_g), ds_M^2)$ or $(\Teich(S_g), ds_{LSY}^2)$, where these two metrics are uniformly contractible. Indeed, Theorem 7.1 in \cite{Yu98} and Theorem \ref{finite} give that the coarse Baum-Connes conjecture holds for $(\Teich(S_g), ds_M^2)$ or $(\Teich(S_g), d_{LSY}^2)$. Then one can see Corollary 3.9 in \cite{Roe96} and use the same argument in the proof of Theorem \ref{mt-2} to get the conclusion. 
\end{remark}

\section{One question}\label{question}

Let $N$ be an $n$-dimensional complete simply-connected Riemannian manifold of nonpositive sectional curvature. For any $p\in N$ , let $T_pN$ be the tangent space of $N$ at $p$, it is well-known that the inverse of the exponential map at $p$
$$\exp_{p}^{-1}:N \to T_pN=\mathbb{R}^n$$
is a $1$-contraction diffeomorphism. And this property plays an important role in the proof of Theorem \ref{gl-3} of Gromov-Lawson.
The following question arises in this project.
\begin{question}\label{mq-1}
Is there any proper differential map 
$$F:(\Teich(S_g),ds_T^2) \to \mathbb{R}^{6g-6}$$ 
such that $F$ is a $1$-contraction of degree one? Moreover, could $F$ be a diffeomorphism?
\end{question}

The constant $1$ for the contraction property in this question is not essential, since one can take a rescaling on the target space $\mathbb{R}^{6g-6}$. An affirmative answer to Question \ref{mq-1} will give another proof of Theorem \ref{mt-1} by following exact the same argument in \cite{GL83}. See Theorem \ref{gromovlawson}.

We enclose this section by recalling several well-known parametrizations which may be helpful for this question.\\

$(1)$. Recall that the Teichm\"uller parametrization at a point $X \in \Teich(S_g)$ is given by
\begin{eqnarray*}
F_X:(\Teich(S_g),ds_T^2) &\to& \mathbb{R}^{6g-6}=\mathbb{S}^{6g-7}\times \mathbb{R}^{\geq 0}\\
                              Y &\mapsto& (V[X,Y],\dist_T(X,Y))
\end{eqnarray*}
where $V[X,Y]$ is the direction of the Teichm\"uller geodesic from $X$ to $Y$.

It is well-known that $F_X$ is a proper differential map of degree one. However, $F_X$ is not a contraction since there exists two geodesics starting at $X$ which have bounded Hausdorff distance, which was proved by Masur in \cite{Masur75}. \newline
 
$(2)$. Let $X \in \Teich(S_g)$ be a hyperbolic surface and $\QD(X)$ be the holomorphic quadratic differential on $X$ which can be identified with $\mathbb{R}^{6g-6}$. Let $\beta_{X}$ be the Bers embedding of $(\Teich(S_g),ds_T^2)$ into $\QD(X)$ with respect to the base point $X$.

It is well-known that $\beta_X$ is a contraction (For example one can see Theorem 4.3 in \cite{FKM13}). However, $\beta_X$ is not proper since the image of the Bers embedding is a bounded subset in $\mathbb{R}^{6g-6}$.\newline

$(3)$. Fix a hyperbolic surface $X \in \Teich(S_g)$. Then for any $Y \in \Teich(S_g)$ there exists a unique harmonic map from $X$ to $Y$ which is isotopic to the identity map. The Hopf differential $F_X(Y)$ of this harmonic map is a holomorphic quadratic differential on $X$. In particular this gives a differential map from $\Teich(S_g)$ to $\QD(X)$. Wolf in \cite{Wolf89} showed that this map
$$F_X: (\Teich(S_g),ds_T^2) \to \QD(X)=\mathbb{R}^{6g-6}$$
is a diffeomorphism. In particular, this map is proper of degree one. However,  Markovic in \cite{Mark02} showed that $F_X$ is not a contraction (One can see Theorem 2.2 in \cite{Mark02}). \newline

$(4)$. Since $ds_T^2 \succ ds_{WP}^2$, the identity map
$$i:(\Teich(S_g),ds_T^2) \to (\Teich(S_g),ds_{WP}^2)$$
is a contraction diffeomorphism. Fix a hyperbolic surface $X \in \Teich(S_g)$. Since the sectional curvature of the Weil-Petersson metric is negative \cite{Tromba86,Wolpert86} and the Weil-Petersson metric is geodescally convex \cite{Wolpert87},  the inverse of the exponential map at $X$
$$\exp_{X}^{-1}:\Teich(S_g) \to T_X(\Teich(S_g))=\mathbb{R}^{6g-6}$$
is a $1$-contraction. Consider the composition map
\begin{eqnarray*}
F_X: (\Teich(S_g),ds_T^2)&\to&  T_X(\Teich(S_g))=\mathbb{R}^{6g-6}\\
Y &\mapsto& \exp_X^{-1}\circ i (Y) 
\end{eqnarray*}
It is not hard to see that $F_X$ is a contraction and differential map of degree one. However, since the Weil-Petersson metric is incomplete \cite{Chu76, Wolpert75}, $F_X$ is not proper.

\bibliographystyle{amsalpha}
\bibliography{ref}

\end{document}